\newtheorem{theorem}{Theorem}[section]
\newtheorem{lemma}[theorem]{Lemma}
\newtheorem{claim}[theorem]{Claim}
\newtheorem{conjecture}[theorem]{Conjecture}
\newtheorem{corollary}[theorem]{Corollary}
\theoremstyle{definition}
\theoremstyle{remark}
\newtheorem{remark}[theorem]{Remark}
\newtheorem{question}[theorem]{Question}
\newtheorem{example}[theorem]{Example}
\numberwithin{equation}{section}
\DeclareMathOperator{\vol}{vol}
\DeclareMathOperator{\Hor}{Hor}
\DeclareMathOperator{\Ver}{Ver}
\renewcommand{\epsilon}{\varepsilon}
\renewcommand{\phi}{\varphi}
\renewcommand{\kappa}{\varkappa}
\title{Convex bodies with all characteristics planar}
\author{Roman~Karasev{$^\spadesuit$}}
\address{Roman Karasev, Institute for Information Transmission Problems RAS, Bolshoy Karetny per. 19, Moscow, Russia 127994}
\email{r\_n\_karasev@mail.ru}
\urladdr{http://www.rkarasev.ru/en/}
\author{Anastasiia~Sharipova{$^\clubsuit$}}
\address{Anastasiia Sharipova, Department of Mathematics, Pennsylvania State University, University Park, PA
16802, USA}
\email{independsharik@gmail.com}
\thanks{{$^\clubsuit$}This work is supported by NSF grant DMS-2005444}
\subjclass[2010]{52B60, 53D99, 37J10}
\keywords{Closed characteristics, Outer billiards, Viterbo's inequality}
\begin{document}

\begin{abstract}
We show that smooth and strongly convex bodies in the symplectic $\mathbb R^{2n}$ for $n>1$ with all characteristics planar, or all outer billiard trajectories planar are affine symplectic images of balls.
\end{abstract}

\maketitle

\section{Introduction}

For a smooth convex body $K\subset\mathbb R^{2n}$ one defines its characteristics as curves on $\partial K$ whose tangent vectors are contained in 
\[
\ker \omega|_{T_z\partial K} = JN_z K,\quad z\in \partial K.
\]
Here $\omega=\sum_{i=1}^n dp_i\wedge dq_i$ is the standard symplectic structure on $\mathbb R^{2n}$, $J$ is the standard complex structure on $\mathbb R^{2n}\cong \mathbb C^n$, $N_zK$ is the line of vectors normal to $\partial K$ at a point $z$. The characteristics of $K$ are the Hamiltonian trajectories on $\partial K$ whenever $\partial K$ is a level set of a Hamiltonian.

In this paper we show that (Lemma~\ref{lemma:same-action}) if all characteristics of $K$ are closed with the same symplectic action $c$ (the integral of $\omega$ by a surface filling the characteristic) then the volume of $K$ equals $c^n/n!$. Moreover, if all characteristics of $K$ are planar (Theorem \ref{theorem:all-planar}) with an additional technical assumption of strong convexity then $K$ is an affine symplectic image of a ball. Those results are inspired by the following natural conjecture (confirmed for $2n=4$ in \cite[Proposition~4.3]{abhs2018} and proved for bodies sufficiently $C^3$-close to balls in \cite[Proposition~4]{abben2019}).

\begin{conjecture}
\label{conjecture:all-same}
If $K\subset\mathbb R^{2n}$ is a smooth convex body with all characteristics closed with the same action then $K$ is a $($possibly non-linear$)$ symplectomorphic image of a ball.
\end{conjecture}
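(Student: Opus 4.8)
\emph{The plan is} to read the hypothesis dynamically and then classify the resulting geometry. By strong convexity the standard Liouville primitive $\lambda=\tfrac12\sum_{i=1}^n(p_i\,dq_i-q_i\,dp_i)$ of $\omega$ restricts to a contact form $\alpha=\lambda|_{\partial K}$ whose Reeb vector field points along $\ker\omega|_{T_z\partial K}=JN_zK$, so Reeb orbits are exactly the characteristics and the common symplectic action $c$ becomes a common Reeb period. A flow all of whose orbits are periodic with one fixed period is geodesible, so by Wadsley's theorem it integrates to an $S^1$-action; assuming this action free (which holds in the model and should be forced by embeddedness of characteristics), the quotient $\pi\colon\partial K\to B$ is a principal circle bundle over a closed symplectic manifold $(B,\omega_B)$ of dimension $2n-2$, with $\alpha$ a connection one-form whose curvature is $\omega_B$ and $[\omega_B]$ integral. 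This is the Boothby--Wang picture; note that the strongly convex planar subcase is already settled by Theorem~\ref{theorem:all-planar}, so the residual content of the conjecture lives precisely where the characteristics are genuinely non-planar and this machinery is needed.

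The heart of the argument is to identify $(B,\omega_B)$. For $K=B^{2n}(r)$ one has $\partial K=S^{2n-1}$, $B=\mathbb{CP}^{n-1}$ and $\omega_B$ a multiple of the Fubini--Study form, so the goal is to prove that the action hypothesis forces $(B,\omega_B)\cong(\mathbb{CP}^{n-1},c\,\omega_{FS})$. Here I would exploit that $K$ is a Liouville filling of $(\partial K,\alpha)$: the radial Liouville field $X$ with $\iota_X\omega=\lambda$ is transverse to $\partial K$ and its flow identifies $K\setminus\{0\}$ with the symplectization $(0,1]\times\partial K$. Combined with the $S^1$-action this produces a Hamiltonian circle action on $K$ whose symplectic reductions are copies of $B$, and the volume identity $\vol K=c^n/n!$ from Lemma~\ref{lemma:same-action} pins the symplectic volume of $(B,\omega_B)$ to the value $c^{n-1}/(n-1)!$ realized by $(\mathbb{CP}^{n-1},c\,\omega_{FS})$. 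What remains is a rigidity statement: a closed symplectic manifold of this dimension, cohomology class, and volume, arising as such a reduction, must be $\mathbb{CP}^{n-1}$.

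Granting $B\cong\mathbb{CP}^{n-1}$, I would reconstruct the symplectomorphism by transporting the prequantization structure. The circle bundle over $(\mathbb{CP}^{n-1},c\,\omega_{FS})$ with the prescribed integral Euler class is unique up to bundle isomorphism, and matching connection forms of equal curvature yields a strict contactomorphism $\phi\colon(\partial K,\alpha)\to(S^{2n-1},\alpha_0)$ with $\phi^*\alpha_0=\alpha$. Such a strict contactomorphism extends canonically through the Liouville flows on both sides to a symplectomorphism of the symplectizations, hence of the punctured fillings $K\setminus\{0\}\to B^{2n}(r)\setminus\{0\}$, and extends over the origin, giving the desired (a priori nonlinear) symplectomorphism to a ball.

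The step I expect to be the genuine obstacle is the base rigidity, i.e.\ showing $B\cong\mathbb{CP}^{n-1}$. When $2n=4$ the base is a closed surface, where area plus the pseudoholomorphic-curve input behind \cite[Proposition~4.3]{abhs2018} settles the matter; but for $n>2$ this is exactly a symplectic Zoll classification problem, and the analogous Riemannian question admits exotic Zoll metrics, so only the extra convex-filling data can be expected to remove them. As a possible shortcut I would instead try to route through the equality case of Viterbo's volume--capacity inequality: since all actions equal $c$ the symplectic capacity of $K$ is $c$, and the identity $\vol K=c^n/n!$ already places $K$ on the conjectured equality locus, so a rigidity theorem for that equality case would yield the symplectomorphism directly and bypass classifying $B$. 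I would caution, however, that this equality case is itself deep and in high dimensions interacts with recent failures of the unrestricted Viterbo inequality, so establishing the required rigidity — rather than merely reducing to it — is where the real work lies.
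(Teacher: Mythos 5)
The statement you are addressing is stated in the paper as Conjecture~\ref{conjecture:all-same} and is left open there: the authors prove only a stronger-hypothesis variant (Theorem~\ref{theorem:all-planar}, all characteristics planar, plus strong convexity) by a completely different route through ellipses, Brunn--Minkowski and Blaschke--Santal\'o. So there is no proof in the paper to compare yours against. What your write-up supplies is essentially the preparatory machinery the paper already develops in Section~\ref{section:all-closed}: the contact form $\lambda|_{\partial K}$, the free circle action generated by the common-period Reeb flow, the quotient $Y=M/S$ with integral reduced class $[\omega_Y]$, and the normalization $\int_Y\omega_Y^{n-1}=1$ (Lemma~\ref{lemma:same-action}), which is exactly your statement that the symplectic volume of the base agrees with that of $(\mathbb{CP}^{n-1},c\,\omega_{FS})$. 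Up to that point you are reproducing known material rather than adding to it. (Two small cautions even here: ``same action'' must be read as ``same minimal period'' for the $S^1$-action to be free rather than merely Seifert, and the conjecture does not assume strong convexity, which you invoke at the outset but never actually need for the contact form to exist on the boundary of a convex body containing the origin.)

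The genuine gap is the step you yourself flag: showing that the reduced space $(B,\omega_B)$ is symplectomorphic to $(\mathbb{CP}^{n-1},c\,\omega_{FS})$. The spectral-sequence argument in Lemma~\ref{lemma:same-action} only shows that $B$ has the integral cohomology of $\mathbb{CP}^{n-1}$ with $[\omega_B]$ a generator; matching the cohomology ring, the volume, and even the diffeomorphism type does not determine the symplectic structure once $\dim B=2n-2\ge 4$, and no rigidity theorem of the required kind is available. This is not a technical loose end but the entire content of the conjecture: for $n=2$ the base is a surface, Moser's theorem gives the identification for free, and that is essentially why the four-dimensional case in \cite{abhs2018} is a theorem while the general case is not. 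Your proposed shortcut through the equality case of Viterbo's volume--capacity inequality fares no better: that equality-case rigidity is itself unproved (and the unrestricted inequality is now known to fail in high dimensions), so you would be trading one open problem for another. In short, your proposal is a correct and sensible reduction of the conjecture to the Boothby--Wang base-rigidity problem, but it does not prove the statement, and you correctly identify the point at which it stops being a proof.
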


As noted by the referee, it is not clear if the convexity in this conjecture is necessary, but we would like to concentrate on convex bodies in this paper. This conjecture is related to our result since the planarity of all characteristics implies that they are all closed with the same action (see Lemma~\ref{lemma:planar same}). Hence we make stronger assumptions and deduce a stronger consequence, an affine symplectomorphism instead of an arbitrary symplectomorphism.

There is a nice characterization of a ball in terms of a billiard dynamics \cite{sine1976}: A smooth convex body in $\mathbb{R}^n$ with $n>2$ is a ball if and only if all its billiard trajectories are planar. We call a trajectory \emph{planar} if it lies in some affine $2$-dimensional plane.

Another motivation for us is that one can formulate a similar statement in terms of outer (dual) billiard dynamics \cite{tabachnikov1993}.

\begin{conjecture}
\label{conjecture:outer-billiards}
All outer billiard trajectories of a smooth and strictly convex body in dimension $2n\ge 4$ are planar if and only if the body is a ball up to an affine symplectomorphism.
\end{conjecture}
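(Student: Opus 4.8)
The plan is to reduce the outer billiard statement to the rigidity result already obtained for characteristics, Theorem~\ref{theorem:all-planar}, by an adiabatic shadowing argument relating near-boundary outer billiard orbits to the characteristic foliation of $\partial K$. Recall that the symplectic outer billiard map $T$ sends $x\notin K$ to $T(x)=2z-x$, where $z\in\partial K$ is the tangency point such that the segment $[x,T(x)]$ has midpoint $z$ and direction $T(x)-x$ parallel to the characteristic direction $J\nu(z)$; equivalently $x=z-sJ\nu(z)$ for a unique $z\in\partial K$ and $s\in\mathbb R$, where strict convexity guarantees uniqueness. A trajectory is an orbit $x,T(x),T^2(x),\dots$, and its tangency points $z_i=(x_i+x_{i+1})/2$ lie on $\partial K$.

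The easy direction, that a ball (or its affine symplectic image) has all trajectories planar, I would settle by direct computation. For the round ball $\nu(z)=z$ and $J\nu(z)=Jz$, so the defining relation $x=z-sJz=(1-is)z$ (writing $J$ as multiplication by $i$) shows $z\in\mathbb C x$, whence $T(x)=2z-x\in\mathbb C x$. Thus every orbit of the ball lies in the complex line $\mathbb C x$, a linear $2$-plane. Since the whole construction is natural under affine symplectomorphisms, which map lines to lines and intertwine $T$, every affine symplectic image of a ball again has all trajectories planar.

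For the converse I would show that planarity of all trajectories forces planarity of all characteristics, after which Theorem~\ref{theorem:all-planar} concludes. Fix a characteristic $\gamma$ through $z_0\in\partial K$ and consider, for small $s>0$, the orbit starting at $x_0=z_0-sJ\nu(z_0)$. Since $T(x_0)=z_0+sJ\nu(z_0)$ is again within $O(s)$ of $\partial K$ and displaced along the characteristic direction, the tangency points $z_0,z_1,z_2,\dots$ drift along $\partial K$ following $\gamma$ with step $O(s)$, while the normal displacement accumulates only at second order. An averaging estimate then shows that over the $O(1/s)$ steps needed to traverse $\gamma$ the orbit stays within $O(s)$ of $\gamma$, so the tangency points become $\delta$-dense on $\gamma$ with $\delta\to0$ as $s\to0$. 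By hypothesis each such orbit lies in an affine $2$-plane $\Pi_s$; these planes all meet a fixed neighbourhood of $z_0$, so their directions lie in the compact Grassmannian $G(2,2n)$ and a subsequence converges to a $2$-plane $\Pi_0$. For each $p\in\gamma$ one can choose tangency points $z_{i}\in\Pi_s$ with $z_i\to p$ along this subsequence, so $p\in\Pi_0$; hence $\Pi_0$ contains a dense subset of $\gamma$, and therefore all of $\gamma$. Thus $\gamma$ is planar, and since $\gamma$ was arbitrary, every characteristic is planar.

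I expect the adiabatic confinement to be the main obstacle: one must guarantee that orbits launched $O(s)$ from the boundary genuinely shadow an \emph{entire} characteristic rather than drifting off, which amounts to an averaging or KAM-type estimate controlling the normal escape over $O(1/s)$ iterations, and one must ensure that the spanning planes $\Pi_s$ converge to a plane actually containing $\gamma$ (delicate when $\gamma$ is non-closed or the spanning planes degenerate). A secondary gap is the hypothesis mismatch: Theorem~\ref{theorem:all-planar} is stated under \emph{strong} convexity whereas the conjecture assumes only \emph{strict} convexity, so one would need either to upgrade the regularity extracted from the shadowing construction or to prove the planar-characteristics rigidity under the weaker assumption.
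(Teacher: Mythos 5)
First, note that the statement you are proving is stated in the paper as a \emph{conjecture}: the paper only establishes it under the stronger hypothesis of strong convexity (Corollary~\ref{corollary:outer-planar}), by combining Theorem~\ref{theorem:all-planar} with Lemma~\ref{lemma:outer-planar}, which asserts that for a smooth strictly convex body all outer billiard trajectories are planar if and only if all characteristics are planar. Your overall strategy --- reduce to Theorem~\ref{theorem:all-planar} by showing that planarity of all outer billiard orbits forces planarity of all characteristics --- is exactly the paper's strategy, and your forward direction (explicit computation for the ball, naturality under affine symplectomorphisms) is fine. You also correctly identify that this route can only deliver the strong-convexity version, since Theorem~\ref{theorem:all-planar} requires it.

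The genuine gap is in the reduction itself. Your argument rests entirely on the adiabatic shadowing claim: that an orbit launched at $x_0=z_0-sJ\nu(z_0)$ has tangency points tracking the characteristic $\gamma$ through $z_0$ within $O(s)$ over the $O(1/s)$ iterations needed to traverse it, with the normal parameter $s_i$ not drifting away. You state this as the step to be proved (``an averaging or KAM-type estimate'') but do not prove it, and it is the entire content of the implication: without it nothing forces the limit plane $\Pi_0$ to contain more than an infinitesimal arc of $\gamma$. The estimate is not implausible --- one expects $z_{i+1}=z_i+2s_iJ\nu(z_i)+O(s_i^2)$ and $s_{i+1}=s_i+O(s_i^2)$, and the accumulated error over $O(1/s)$ steps is then $O(s)$ --- but making this uniform, handling possibly non-closed characteristics (where the traversal time is a priori infinite), and verifying the plane convergence all require real work that is absent. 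The paper avoids this analysis entirely: its proof of Lemma~\ref{lemma:outer-planar} is soft and combinatorial. It fixes the plane $L$ of one planar trajectory through $z$, studies the set of ``good'' points of $L\cap\partial K$ (where the characteristic direction lies in $L$), and shows that if this set is not dense it must be finite with a uniform count between consecutive tangency points (Claim~\ref{claim:uniform}), forcing periodicity; it then derives a contradiction by showing that the period sets $A_n$ are closed and partition the connected interval $(0,\infty)$, while periods must blow up as the starting point approaches $\partial K$. Hence the density of good points, hence planarity of the characteristic through $z$, with no quantitative shadowing needed. If you want to salvage your approach you must either supply the adiabatic estimate in full (including the non-closed case), or switch to a topological argument of the paper's kind.
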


Recall that an outer billiard is a dynamical system where a point $x$ outside of a convex body $K \subset \mathbb R^{2n}$ is mapped to the point $y$ on the tangent line from $x$ to $K$ along the characteristic direction at the tangency point such that the point of tangency is the midpoint between $x$ and $y$. The outer billiard map looks like a discretization of the characteristic flow on $\partial K$, although the connection is not straightforward, see the argument in Section~\ref{section:outer} for example. For a detailed introduction to outer (dual) billiards the reader is referred to \cite{tabachnikov1993,tabachnikov2003}. Another billiard-like characterization of a ball is given in \cite{bialy2018}.

Below we formulate and prove a result about planar characteristics, and deduce from this the statement for outer billiard trajectories.

\begin{theorem}
\label{theorem:all-planar}
Let $n \ge 2$ and $K \subset \mathbb{R}^{2n}$ be a strongly convex body with smooth boundary. Then all characteristics on $\partial K$ are planar if and only if $K$ is a ball up to an affine symplectomorphism.
\end{theorem}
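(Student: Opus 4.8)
The plan is to dispatch the easy implication directly and to devote the work to the converse. If $K=A(B)+b$ with $A\in\mathrm{Sp}(2n)$ and $B$ a round ball, then the characteristic foliation, being determined by $\omega$ and $\partial K$ alone, is the $A$-image of the foliation of $\partial B$ by great circles in complex lines; since affine maps send $2$-planes to $2$-planes, every characteristic of $K$ is a planar ellipse. So only the ``only if'' direction needs an argument, and I would begin by recording, via Lemma~\ref{lemma:planar same} and Lemma~\ref{lemma:same-action}, that planarity already forces all characteristics to be closed with one common action $c$ and $\vol K=c^n/n!$; these facts are reserved for the equality/rigidity stage.

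Next I would set up the local frame along a single planar characteristic $\gamma$, lying in an affine plane $z_0+P$ with $P$ a $2$-dimensional linear subspace. Since the tangent to $\gamma$ is $J\nu$ with $\nu$ the outer unit normal, planarity gives $J\nu(z)\in P$, equivalently $\nu(z)\in JP$, along all of $\gamma$; moreover $\gamma$ is exactly $\partial\big(K\cap(z_0+P)\big)$. Parametrizing $\gamma$ by the angle $\theta$ of its in-plane normal and writing $h(\theta)$ for the in-plane support function of the slice, a short computation gives the basic identity $H_K(\nu(\theta))=\langle z_0,\nu(\theta)\rangle+s_0\,h(\theta)$, where $H_K$ is the support function of $K$ and $s_0=\omega|_P$ is the symplectic area of the unit square in $P$. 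Equivalently, the restriction of $H_K$ to the great circle $S^{2n-1}\cap JP$ --- which is the support function of the $2$-dimensional projection $\pi_{JP}(K)$ --- agrees, up to a first-harmonic (translation) term and the factor $s_0$, with the slice $K\cap(z_0+P)$. Because $|s_0|=|\langle Je_1,e_2\rangle|\le 1$ for an orthonormal frame of $P$, with equality precisely when $P$ is a complex line, and because the enclosed symplectic area of $\gamma$ equals $s_0$ times its Euclidean area, the common-action condition reads $c=s_0\cdot\area(K\cap(z_0+P))$ for every characteristic.

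Globally, the Gauss map $z\mapsto\nu(z)$ is a diffeomorphism $\partial K\to S^{2n-1}$ by strong convexity, and it carries the foliation of $\partial K$ by characteristics to a foliation of $S^{2n-1}$ by the great circles $S^{2n-1}\cap JP_\gamma$. The task is then to show this great-circle foliation is the Hopf foliation, i.e.\ every $P_\gamma$ is a complex line (so $s_0\equiv 1$), after which the projection--section identity above rigidifies $K$ into an ellipsoid and the equal-area relation $c=\area(K\cap(z_0+P))$ together with $\vol K=c^n/n!$ forces that ellipsoid to be a symplectic ball, i.e.\ an affine symplectic image of $B$. I expect the decisive obstacle to be exactly this promotion of the per-characteristic data to the global statement that all planes are complex: ruling out non-complex ($|s_0|<1$) planes is the symplectic analogue of the rigidity in Sine's billiard theorem \cite{sine1976}, and it is where strong convexity and the equality case ($|s_0|=1$) of the area/action inequality must be combined, presumably through an integral-geometric or isoperimetric comparison rather than a purely local argument.
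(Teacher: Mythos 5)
Your easy direction and your opening reduction --- planarity forces all characteristics to be closed with one common action $c$ and $\vol K=c^n/n!$ via Lemmas~\ref{lemma:planar same} and~\ref{lemma:same-action} --- agree with the paper. But the converse, which is the substance of the theorem, is not actually proved. After setting up the slice/projection identity you declare that ``the task is to show this great-circle foliation is the Hopf foliation, i.e.\ every $P_\gamma$ is a complex line,'' and you then defer that task to an unspecified ``integral-geometric or isoperimetric comparison.'' That is precisely the hard part, and no argument is offered for it. Moreover, the goal as you state it is not the right one: for a generic affine symplectic image $A(B)$ of a ball the characteristic planes are the $A$-images of complex lines, which are symplectic $2$-planes but in general not $J$-invariant, so $s_0<1$ on some characteristics of bodies that \emph{do} satisfy the hypothesis. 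What you would actually need is that the family of planes is linearly symplectically equivalent to the Hopf family, and producing that normalizing linear map is essentially the content of the theorem, so the strategy as framed is circular.

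For comparison, the paper's proof does not pass through the Gauss-image foliation of $S^{2n-1}$ at all. It establishes three separate rigidity statements: (i) each planar characteristic is an ellipse, because the $\omega$-orthogonal complements $L^{\perp_\omega}$ define an affine connection on the bundle of characteristic planes whose holonomy is a non-discrete, hence full, rotation group preserving the John ellipse of each slice (Lemma~\ref{lemma:planar to ellipses}); (ii) $K$ is centrally symmetric, because $K-K$ again has all characteristics planar and the Viterbo equality forces equality in the Brunn--Minkowski inequality; and (iii) $K$ is an ellipsoid, because the symplectic polar $K^\omega$ also has planar characteristics and the two Viterbo equalities combine with the Blaschke--Santal\'o inequality into its equality case (Lemma~\ref{lemma:ellipses to ellipsoid}). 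An ellipsoid whose characteristics all have the same period is then a ball in symplectic canonical coordinates. None of these inputs (holonomy/John ellipse, Brunn--Minkowski, Blaschke--Santal\'o) appears in your outline; the missing step is not a technical detail but the core of the argument, and your own closing sentence concedes as much.
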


We say that a body is \emph{strongly convex} if it has positive curvature $($positive definite second fundamental form$)$ at every boundary point. Our proof essentially uses the positive curvature (strong convexity) assumption. If we allow $K$ to be unbounded (note that the definition of a \emph{convex body} assumes it is bounded) with no restriction on strong convexity then the theorem may indeed fail. Namely, all characteristics on the boundary of the symplectic cylinder $Z = \{ (q, p) \in \mathbb R^{2n} |\ q_1^2 + p_1^2 = 1\}$ are planar and the cylinder is not symplectomorphic to a ball because of having infinite volume. This justifies the following question.

\begin{question}
Does Theorem \ref{theorem:all-planar} hold true for smooth convex bodies without any assumption on the curvature?
\end{question}

Our proof is mainly built on global invariants of convex bodies in $\mathbb R^{2n}$, the volume, the symplectic capacity, and even the cohomology of their quotient spaces. In particular, we cannot give any information on convex bodies that have planar characteristics only locally. Hence there remains the following question.

\begin{question}
Assume that a smooth convex body $K$ has a planar characteristic $\gamma\subset \partial K$ and all its characteristics sufficiently close to $\gamma$ are also planar. Can one describe the structure of a neighborhood of $\gamma$ in $\partial K$? Can one prove that under the additional positive curvature assumption a neighborhood of $\gamma$ in $\partial K$ is a part of the surface of an ellipsoid?
\end{question}

In the last section of this text we infer the following result about outer billiards from Theorem~\ref{theorem:all-planar}.

\begin{corollary}
\label{corollary:outer-planar}
Let $n \ge 2$ and $K \subset \mathbb{R}^{2n}$ be a strongly convex body with smooth boundary. Then all outer billiard trajectories of $K$ are planar if and only if $K$ is a ball up to an affine symplectomorphism.
\end{corollary}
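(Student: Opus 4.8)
The plan is to deduce the corollary from Theorem~\ref{theorem:all-planar} by relating the outer billiard dynamics to the characteristic foliation. For the ``if'' direction I would first note that the characteristic direction $\ker\omega|_{T_z\partial K}$ is a purely symplectic object and that the midpoint construction defining the outer billiard map is affine; hence any affine symplectomorphism $\Phi$ conjugates the two maps, $T_{\Phi(K)}=\Phi\circ T_K\circ\Phi^{-1}$, and carries $2$-planes to $2$-planes. Planarity of all trajectories is therefore an affine-symplectic invariant, so it suffices to verify it for the round ball $B$. For $B$ one has $N_z=z$ and characteristic direction $Jz$; solving $x=(I+tJ)z$ with $t=\sqrt{|x|^2-1}$ gives $T(x)=\tfrac{1-t^2}{1+t^2}x-\tfrac{2t}{1+t^2}Jx\in\operatorname{span}_{\mathbb R}(x,Jx)$, so every orbit stays in the complex line through $x$, an affine $2$-plane. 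This settles the ``if'' direction.

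For the converse, assume all outer billiard trajectories of $K$ are planar; I would show all characteristics are planar and then invoke Theorem~\ref{theorem:all-planar}. The first observation is local to a single orbit $\{x_k=T^kx\}$ lying in a plane $P$: its tangency points $z_k=\tfrac12(x_k+x_{k+1})$ lie on the convex curve $C:=P\cap\partial K$, and the segment $x_kx_{k+1}$ runs along the characteristic direction $JN_{z_k}$, which is thus parallel to $P$. Since the characteristic direction is automatically tangent to $\partial K$, it lies in $P\cap T_{z_k}\partial K=T_{z_k}C$; hence the characteristic tangent line $\ell_{z_k}=z_k+\mathbb R\,JN_{z_k}$ is contained in $P$ and coincides with the tangent line of $C$ at $z_k$. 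In other words, inside $P$ the orbit is exactly a planar (two-dimensional) outer billiard orbit of the convex curve $C$, whose tangency points advance monotonically around $C$.

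The heart of the argument is a limiting step. Fixing $z_0\in\partial K$ I would take exterior points $x\to z_0$ along $\ell_{z_0}$; each orbit lies in a plane $P(x)\supset\ell_{z_0}$, and by compactness I may pass to a limit $P(x)\to P_0\supset\ell_{z_0}$. As $x\to z_0$ the near-boundary planar outer billiard of $C(x)=P(x)\cap\partial K$ has consecutive tangency points at distance tending to $0$ while still encircling $C(x)$, so the polygonal path through the tangency points converges, after reparametrization, to the characteristic through $z_0$; equivalently, the characteristic direction is tangent to $C_0=P_0\cap\partial K$ at a set of points that becomes dense in $C_0$ in the limit. By continuity the characteristic direction is then tangent to $C_0$ everywhere, so $C_0$ is an integral curve of the characteristic line field through $z_0$, i.e. the characteristic $\gamma$ through $z_0$ equals $C_0\subset P_0$ and is planar. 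As $z_0$ is arbitrary, every characteristic of $K$ is planar, and Theorem~\ref{theorem:all-planar} gives that $K$ is a ball up to an affine symplectomorphism.

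The main obstacle I anticipate is precisely this limiting step: one must make rigorous the assertion that near-boundary outer billiard orbits approximate the characteristic flow — controlling the choice of characteristic tangent line (in dimension $2n\ge4$ there are several, and one must follow a consistent, smooth branch), proving that the step sizes shrink while the tangency points sweep out the whole characteristic, and establishing convergence of the planes $P(x)$ so that the limiting plane contains all of $\gamma$. The two-dimensional reduction above is what makes this tractable, since it reduces the dynamical analysis to the well-understood planar outer billiard of the curve $C(x)$; the remaining work is the quantitative comparison of a single outer-billiard step with the characteristic flow near $\partial K$.
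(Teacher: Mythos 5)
Your ``if'' direction is correct and self-contained: affine symplectomorphisms conjugate the outer billiard map and preserve $2$-planes, and your explicit formula for the ball is right. (The paper instead gets this direction from the planarity of the characteristics of the ball together with the uniqueness of outer billiard trajectories; either route works.) Your reduction of a planar orbit in a plane $P$ to the two-dimensional outer billiard of the section $C=P\cap\partial K$, with the tangency points being exactly the points where the characteristic direction is tangent to $C$, is also correct and is the same observation the paper makes (its ``good points'').

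The converse, however, has a genuine gap, and it is precisely the step you flag. Your argument needs the tangency points of the orbit starting at $x\in\ell_{z_0}$ to sweep out all of $C(x)$ with gaps tending to $0$ as $x\to z_0$. Two things can fail. First, only the initial step has size $O(|x-z_0|)$; without a confinement mechanism (e.g.\ Moser invariant curves for the planar outer billiard of $C(x)$, which would moreover have to be produced uniformly as the plane $P(x)$ varies) nothing prevents the orbit from drifting away from $C(x)$, so that later steps are not small and the tangency points are not $\epsilon$-dense. Second, the tangency points could stall, accumulating at a single point of $C(x)$ without ever completing a circuit; the paper explicitly confronts this possibility (``it can not stop by getting closer and closer to the curve'') and rules it out only by a nontrivial counting argument. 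If the orbit stalls, even the convergence $P(x)\to P_0$ is problematic, since $P_0$ could then degenerate into the tangent hyperplane at $z_0$ and $P_0\cap\partial K$ would not be a curve. So the limiting step is not a technicality to be cleaned up; it is the actual content of the proof, and it is not supplied.

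For comparison, the paper's argument avoids all asymptotic analysis. It fixes one planar orbit through $z$, and shows that either the good points of $\gamma=L\cap\partial K$ are dense (whence $\gamma$ is the sought planar characteristic) or they are finite and every planar orbit tangent at $z$ is periodic; in the latter case the decomposition $(0,\infty)=\bigsqcup_n A_n$ of the initial condition by period, together with closedness of each $A_n$ and connectedness of the interval, forces all periods to coincide, contradicting the fact that periods must grow without bound as the starting point approaches $z$. You could either import that argument or supply the missing uniform confinement estimate for the sections $C(x)$ to complete your version. (A minor point: the characteristic direction $JN_zK$ is a single line even for $2n\ge 4$, so there is no branch of the line field to choose; the only genuine choice is which of the finitely many characteristic tangent lines from an exterior point the outer billiard map uses, and that is fixed in the standard definition.)
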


\begin{remark}
The similar statement is false for \emph{symplectic billiards} which can also be defined for any convex body in the symplectic $\mathbb{R}^{2n}$ (see \cite{albtab2017} for the actual definition). In case of symplectic billiards there is no such convex body in $\mathbb{R}^{2n}$ with $n \ge 2$ which has all planar symplectic billiard trajectories. Indeed, for any two points on the boundary of such a body there is a symplectic billiard trajectory through them and the planarity assumption would imply that characteristic tangent lines at any two points intersect or parallel. This immediately implies that all characteristics lines lie in the same plane and contradicts $n \ge 2$. 
\end{remark}

\subsection*{Acknowledgments} The authors thank Sergei Tabachnikov for drawing our attention to these problems and useful discussions, Alexey Balitskiy, Mark Berezovik, Florent Balacheff, Viktor Ginzburg, Yaron Ostrover, Felix Schlenk, and the unknown referees for useful remarks and corrections.

\section{All characteristics closed with same action}
\label{section:all-closed}

In this section we study convex bodies in $\mathbb R^{2n}$ having all characteristics closed with the same action. These are related to Viterbo's conjecture \cite{vit2000}: \emph{For convex bodies $K$ in the symplectic $\mathbb R^{2n}$,}
\[
\vol K \ge \frac{c_{EHZ}(K)^n}{n!}.
\]
In view of the description of the Ekeland--Hofer--Zehnder capacity $c_{EHZ}(K)$, for a smooth convex body $K$, as the minimal action of a closed characteristic $\gamma$ on $\partial K$ (see \cite{aao2008,aao2012}), it makes sense to check the inequality in the case when all or almost all (in terms of measure) characteristics of $\partial K$ are closed and have the same action, as expressed in Conjecture \ref{conjecture:all-same}.

In \cite[Proposition~3.3]{apb2012} (going back to observations in \cite{boothbywang1958,weinstein1974}) the following was observed when discussing contact geometry approaches to the Viterbo conjecture. We provide the proof here, since the ideas from those proofs are needed in our main argument.

\begin{lemma}[\'Alvarez Paiva, Balacheff]
\label{lemma:apb-besse}
If $\lambda$ is a contact form on a closed manifold $M$ of dimension $2n-1$ and all its Reeb orbits are closed with the same period $c$ then $\int_M \lambda\wedge (d\lambda)^{n-1} = k c^n$, for a nonzero integer $k$.
\end{lemma}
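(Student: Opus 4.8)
The plan is to use the Boothby--Wang construction. Since every Reeb orbit is closed with the same (minimal) period $c$, the Reeb flow $\phi_t$ satisfies $\phi_c=\id$ and defines a free action of the circle $S^1=\mathbb R/c\mathbb Z$ on $M$, whose infinitesimal generator is the Reeb field $R$. First I would record that $d\lambda$ is basic for this action: it is invariant since $\mathcal L_R d\lambda = d\iota_R d\lambda + \iota_R d(d\lambda)=0$, and horizontal since $\iota_R d\lambda=0$ by definition of $R$. Hence $d\lambda=\pi^*\sigma$ for the quotient projection $\pi\colon M\to B:=M/S^1$ and a closed $2$-form $\sigma$ on the closed manifold $B^{2n-2}$. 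Because $(d\lambda)^{n-1}$ is nondegenerate on $\ker\lambda$, the form $\sigma$ is symplectic.

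Next I would compute the integral by integration over the fibre. Writing $\pi_*$ for the fibrewise integration $\Omega^{2n-1}(M)\to\Omega^{2n-2}(B)$, the projection formula together with $(d\lambda)^{n-1}=\pi^*(\sigma^{n-1})$ gives
\[
\pi_*\big(\lambda\wedge(d\lambda)^{n-1}\big)=\Big(\int_{\mathrm{fibre}}\lambda\Big)\,\sigma^{n-1}.
\]
On a fibre, parametrised by $t\mapsto\phi_t(x)$ with $t\in[0,c]$, one has $\lambda(R)=1$, so $\int_{\mathrm{fibre}}\lambda=c$ and therefore $\int_M\lambda\wedge(d\lambda)^{n-1}=c\int_B\sigma^{n-1}$.

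It then remains to see that $\int_B\sigma^{n-1}$ is $c^{n-1}$ times a nonzero integer. The form $\lambda$ is a connection $1$-form for the principal $S^1$-bundle $\pi$ (it is invariant and $\lambda(R)=1$), so its normalisation $\lambda/c$ integrates to $1$ over each fibre and its curvature $\sigma/c$ represents the Euler class $e\in H^2(B;\mathbb Z)$ of the bundle. Consequently $\int_B\sigma^{n-1}=c^{n-1}\int_B(\sigma/c)^{n-1}=c^{n-1}\langle e^{n-1},[B]\rangle$, and setting $k:=\langle e^{n-1},[B]\rangle$ yields $\int_M\lambda\wedge(d\lambda)^{n-1}=kc^n$ with $k\in\mathbb Z$. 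Finally $k\neq 0$ because $\sigma$ is symplectic, so $\sigma^{n-1}$ is a volume form on $B$ and $\int_B\sigma^{n-1}\neq 0$ once $B$ is oriented by $\sigma^{n-1}$.

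The step I expect to be the main obstacle is the first one: guaranteeing that the Reeb flow really defines a \emph{free} $S^1$-action, so that $B$ is a smooth manifold and $e$ an honest integral class. This is exactly where the hypothesis ``all orbits closed with the same period $c$'' must be read as ``the same minimal period''; if one only assumes that $c$ is a common period, short orbits produce finite isotropy, $B$ becomes a Seifert orbifold, and the argument yields $k\in\mathbb Q$ rather than $k\in\mathbb Z$. Everything downstream --- the descent of $d\lambda$, the fibre integration, and the nonvanishing --- is then routine.
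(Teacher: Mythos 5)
Your argument is correct and follows essentially the same route as the paper's: both pass to the free circle action generated by the Reeb flow, identify the descended form $\sigma$ (the paper's $\omega_Y$) with (a multiple of) the integral Euler/Chern class of the circle bundle $M\to M/S^1$, and conclude by fibre integration together with the nonvanishing of the symplectic volume of the quotient. The only cosmetic difference is that the paper rescales $\lambda$ so that the common period is $1$ before quotienting, whereas you keep the period $c$ and track the resulting powers of $c$ explicitly; your closing remark about minimal periods and orbifold quotients matches the paper's implicit assumption that the action is free.
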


\begin{proof}[Sketch of the proof]
Rescale $\lambda$ so that all Reeb orbits are of period $1$, note that the period equals the action in view of the equality $\lambda(R) \equiv 1$. There arises the free action of the circle $S=\mathbb R/\mathbb Z$ on $M$ and the quotient space $Y = M/S$.

The space $Y$ carries the symplectic structure of the symplectic reduction, $\omega_Y$, which is defined by lifting the tangent space of $Y$ to $M$ and applying $\omega=d\lambda$; this does not depend of the lift since $i_R\omega = L_R\omega= 0$ and $\omega$ is $S$-invariant. It is crucial to note that $\omega_Y$ is a symplectic form on $Y$ and it represents the Chern class of the circle bundle $M \to Y$. Hence $[\omega_Y]$ is an integral cohomology class and so is $[\omega_Y^{n-1}]$. Then we may perform the integration by a version of Fubini formula for the fibration, noting that $\int \lambda$ over each fiber is $1$, 
\[
\int_M \lambda\wedge \omega^{n-1} = \int_Y \omega_Y^{n-1} = k,
\]
for a positive integer $k$, since the symplectic volume of $Y$ is positive.
\end{proof}

In our setting we may put $M=\partial K$ and, assuming the origin in the interior of $K$, take as contact form on $M$ 
\[
\lambda = \frac{1}{2}\sum_i \left( p_idq_i - q_idp_i\right).
\]
Then $d\lambda = \omega$, $d (\lambda\wedge \omega^{n-1}) = \omega^n$ and by the Stokes formula we have:
\[
\int_M \lambda\wedge \omega^{n-1} = \int_K \omega^n = n! \vol K,
\]
and in view of appropriately scaled Lemma~\ref{lemma:apb-besse}, $\vol K = k c^n / n!$. In fact, it is possible to show that $k=1$ (as hinted by \cite[Remark~2]{weinstein1974} and done explicitly in the proof of Theorem~3.4 of the arxiv version~1 of \cite{apb2012}) and there is in fact the case of equality in Viterbo's inequality for bodies all of whose characteristics are closed with the same action. For completeness, we provide a proof here, which is different from the one given in \cite{apb2012}.

\begin{lemma}[\'Alvarez Paiva, Balacheff]
\label{lemma:same-action}
If $\lambda$ is a contact form on a sphere $M$ of dimension $2n-1$ and all its Reeb orbits are closed with the same period $c$ then $\int_M \lambda\wedge (d\lambda)^{n-1} = c^n$. As a consequence, for a convex $K\subset\mathbb R^{2n}$, if the boundary $\partial K$ is smooth and all its characteristics are closed with the same action $c$ then $\vol K = c^n/n!$.
\end{lemma}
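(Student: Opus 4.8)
The plan is to reduce to the case of action (period) $c=1$ by rescaling the contact form, and then to sharpen the conclusion of Lemma~\ref{lemma:apb-besse} by pinning the integer $k$ down to exactly $1$, using the hypothesis that $M$ is a sphere. Replacing $\lambda$ by $\lambda/c$ multiplies the Reeb field by $c$, hence rescales every period to $1$, while $\int_M \lambda\wedge(d\lambda)^{n-1}$ gets multiplied by $c^{-n}$. So it suffices to prove that this integral equals $1$ when $c=1$. In that normalization I would reuse the entire setup from the proof of Lemma~\ref{lemma:apb-besse}: the free circle action of $S=\mathbb R/\mathbb Z$ on $M=S^{2n-1}$, its quotient $Y=M/S$, the reduced symplectic form $\omega_Y$, and the identity
\[
\int_M \lambda\wedge(d\lambda)^{n-1} = \int_Y \omega_Y^{n-1} = k,
\]
where $e:=[\omega_Y]\in H^2(Y;\mathbb Z)$ is the Euler class of the circle bundle $M\to Y$. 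Since $k=\int_Y e^{n-1}$, the whole problem collapses to showing $\int_Y e^{n-1}=1$.

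The key step is the Gysin sequence of the oriented circle bundle $S^1\to M\to Y$. Because $M=S^{2n-1}$ has vanishing reduced cohomology in all degrees $1,\dots,2n-2$, exactness forces the cup-product maps $\cup\, e\colon H^{j-2}(Y;\mathbb Z)\to H^{j}(Y;\mathbb Z)$ to be isomorphisms for $2\le j\le 2n-2$, together with $H^1(Y;\mathbb Z)=0$. An easy induction starting from $H^0(Y;\mathbb Z)=\mathbb Z$ then shows that $Y$ is an integral cohomology $\mathbb{CP}^{n-1}$: the odd cohomology vanishes, $H^{2j}(Y;\mathbb Z)=\mathbb Z$ is generated by $e^{j}$, and in particular $e^{n-1}$ generates $H^{2n-2}(Y;\mathbb Z)\cong\mathbb Z$. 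Hence $\int_Y e^{n-1}=\pm1$. Finally, since $\omega_Y$ is symplectic, $\omega_Y^{n-1}$ is a positive volume form for the induced orientation, so $\int_Y \omega_Y^{n-1}>0$, which forces the sign to be $+1$. Therefore $k=1$, and undoing the rescaling gives $\int_M \lambda\wedge(d\lambda)^{n-1}=c^n$.

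For the consequence about convex bodies I would take $M=\partial K$ (a sphere, as $\partial K$ is smooth and $K$ is convex) with the contact form $\lambda=\frac12\sum_i(p_i\,dq_i-q_i\,dp_i)$ as in the text, so that $d\lambda=\omega$ and, by Stokes, $\int_M \lambda\wedge\omega^{n-1}=\int_K\omega^n=n!\,\vol K$. The assumption that all characteristics of $\partial K$ are closed with action $c$ means precisely that the Reeb orbits of $\lambda$ are closed of period $c$, so the first part yields $\int_M\lambda\wedge\omega^{n-1}=c^n$. Comparing the two expressions gives $\vol K=c^n/n!$.

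The main obstacle I expect is the topological step: establishing not merely that $e$ is nonzero but that $e^{n-1}$ \emph{generates} the top integral cohomology of $Y$. This is exactly what the sphere hypothesis buys through the Gysin sequence, and it is the point at which the argument genuinely strengthens Lemma~\ref{lemma:apb-besse}, whose conclusion only guarantees that $k$ is some nonzero integer.
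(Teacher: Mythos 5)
Your proof is correct, and its overall skeleton is the same as the paper's: rescale so the common period is $1$, pass to the free circle quotient $Y=M/S$, identify $[\omega_Y]$ with the Euler/Chern class $e$ of the bundle $M\to Y$, show $\int_Y e^{n-1}=\pm 1$ using the hypothesis that $M$ is a sphere, and fix the sign by the symplectic orientation. The one place where you genuinely diverge is the topological core: the paper computes $H^*(Y)$ via equivariant cohomology, i.e.\ the Serre spectral sequence of $M\to (M\times ES)/S\to BS$, then approximates $BS$ by $\mathbb{C}P^N$ and argues that the classifying map sends the fundamental class of $Y$ to that of $\mathbb{C}P^{n-1}$, pairing it with $\omega_{FS}^{n-1}$. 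You instead run the Gysin sequence of the circle bundle $S^1\to S^{2n-1}\to Y$ directly, which shows at once that $Y$ is an integral cohomology $\mathbb{C}P^{n-1}$ with $e^{j}$ generating $H^{2j}(Y;\mathbb{Z})$, so that $\int_Y e^{n-1}=\pm1$ by Poincar\'e duality. The two computations are of course cousins (the Gysin sequence is extracted from the same spectral sequence), but your route is more self-contained: it avoids the Borel construction, the finite-dimensional approximation of $BS$, and the push-forward of the homology class under the classifying map. What the paper's version buys in exchange is the explicit identification of $F^*[\omega_{FS}]$ with $[\omega_Y]$, which it reuses conceptually elsewhere; your version only needs the standard fact that $[\omega_Y]$ is the Euler class, which you correctly import from the proof of Lemma~\ref{lemma:apb-besse}. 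The reduction to convex bodies is handled the same way in both arguments.
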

\begin{proof}
Since the period is constant, one obtains a free circle $S$ action\footnote{We caution the reader that the word ``action'' here means that the group $S$ acts on the manifold $M$, not the symplectic action functional in previous occurrences of ``action''.} on $M$ and $Y=M/S$ is also a smooth manifold. 

In equivariant topology \cite{hsiang1975} it is standard that the cohomology of the quotient $Y = M/S$ is naturally equal to the equivariant cohomology
\[
H_{S}^*(M) = H^*( (M \times ES) / S).
\]
The latter space in the formula is the total space of a fibration over $BS$ ($=\mathbb CP^\infty$) with fiber $M$. Hence $H^*(M/S)$ is calculated through the spectral sequence with the second page equal to
\[
E_2^{x,y} = H^x(BS; H^y(M)).
\]
Since $M$ is a $(2n-1)$-dimensional sphere, it follows that the page $E_2$ is empty for $0<y<2n-1$ and the bottom row $H^x(BS; H^0(M))$ of this spectral sequence in the range $0\le x \le 2n-2$ survives until the page $E_\infty$. From the dimension considerations it follows that this part of the bottom row actually equals the resulting cohomology $H^*(Y)$.

Now we approximate $BS$ by $\mathbb CP^N$ with sufficiently large $N$. The above argument says that the classifying map $F: Y \to \mathbb CP^N$ (obtained from an $S$-equivariant map $M\to S^{2N+1}$) induces an isomorphism in integral cohomology in dimensions from $0$ to $2n-2$. From the meaning of the Chern class (as in the proof of Lemma \ref{lemma:apb-besse}) we obtain that $F^* [\omega_{FS}] = [\omega_Y]$, where $\omega_{FS}$ is the Fubini--Study form in $H^2(\mathbb CP^N)$ and $\omega_Y\in H^2(Y)$ is the reduced symplectic form. It also follows that $F^* [\omega_{FS}^{n-1}] = [\omega_Y^{n-1}]$.

To integrate $\omega_Y^{n-1}$ over $Y$ is the same as to pair $\omega_{FS}^{n-1}$ with the homology class, represented by $F(Y)$ as a homology $(2n-2)$-dimensional cycle. But the cohomology isomorphism implies the homology isomorphism up to dimension $2n-2$, hence $F(Y)$ is homologous to $\mathbb CP^{n-1}\subset \mathbb CP^N$, thus showing that $[\omega_Y^{n-1}]$ must be the generator of $H^{2n-2}(Y)$ and $\int_Y \omega_Y^{n-1} = \pm 1$, where we choose $+1$ because we choose the orientation of $Y$ in accordance with its symplectic form.

Passing to a convex $K\subset\mathbb R^{2n}$, we set $M=\partial K$. After a translation we may assume that the origin is in the interior of $K$ and $M$ is a level set $\{H=1\}$ of a $2$-homogeneous Hamiltonian. Then the action of a closed characteristic equals its Hamiltonian period, which finishes the proof. 
\end{proof}

Till the end of this section we discuss some open questions about the situation when almost all but not all characteristics are closed with the same action. The general idea is that such a behavior may imply the Viterbo equality. But we will see that the situation is not that simple. By ``almost all'' we mean that the characteristics of the given same action cover the boundary of $K\subset\mathbb R^{2n}$ up to $(2n-1)$-dimensional measure zero.

Relating the action of the characteristics to the period of the Reeb flow, as in the above proof, we observe that when almost all the characteristics have the same period $c$ then by continuity $c$ is the period of any characteristic, possibly not minimal. If the period $c$ is not minimal for a characteristic $\gamma$ then its period is $c/k$ for an integer $k\ge 2$, hence the period (action) may drop below $c$ for a set of characteristics of zero measure. Assuming no action (period) drop, we just return to the case of all (without exceptions) characteristics closed with same action. 

\begin{example}
If for some bad characteristic the action drops below the common value $c$, then the capacity $c_{EHZ}$ drops and we cannot expect the equality $\vol K = c^n/n!$. The body
\[
K = \{p_1^2  + q_1^2 + 2p_2^2 + 2q_2^2 = 1\}
\]
has most of the characteristics closed with action $\pi$, while one of the characteristics is closed with action $\pi/2$, the volume being $\pi^2/4$. 
\end{example}

\begin{question}
\label{question:smooth-almost-all}
Is it possible to prove the Viterbo inequality for smooth convex bodies under the assumption that almost all the characteristics are closed with the same action, but there are action drops for some exceptional characteristics?
\end{question}

\begin{remark}[Pointed out by an unknown referee]
The discussion in \cite[The Theorem and Section~2]{wadsley1975} may also be relevant to this question. In \cite[Theorem~1.1]{mazzurade2023} it is stated that in dimension $3$ every contact sphere with all Reeb orbits periodic is strictly contactomorphic to an ellipsoid. This implies that the above question has a positive answer in dimension $4$ by simply inspecting the case of ellipsoids. The case of higher dimensions in Question~\ref{question:smooth-almost-all} seems open so far.
\end{remark}

In the case, when $K$ is not smooth, and not all the characteristics are well-defined, one may also guess from the results of \cite{aao2012,balitskiy2016} that in the case when almost all characteristics are closed and have the same action $c = c_{EHZ}(K)$ (so that there is no action drop), then $\vol K$ must be $c^n/n!$. But this expectation also fails.

\begin{example} Consider the body
\[
K = \{p_1^2 + q_1^2 \le 1, p_2^2 + q_2^2 \le 1\};
\]
almost all characteristics of $\partial K$ are either trajectories of the Hamiltonian $p_1^2 + q_1^2$, or trajectories of $p_2^2 + q_2^2$. In both cases they have action $\pi$, and the remaining part of $\partial K$ (where both inequalities become equalities) may be described as spanned by the generalized characteristics
\[
p_1 = \cos t,\ q_1 = \sin t,\ p_2 = \cos (t+\phi),\ q_2 = \sin (t+\phi)
\]
for $\phi\in [0,2\pi)$, all of which have action $2\pi$. It is even possible to avoid considering generalized characteristics, noting that the inclusion of the unit ball $B^4\subset K$ together with the monotonicity of $c_{EHZ}$ implies $c_{EHZ}(K)\ge c_{EHZ}(B^4) = \pi$. Hence, in this case $c_{EHZ}(K)=\pi$ and
\[
\vol K = \pi^2 > c_{EHZ}(K)^2/2 = \pi^2/2.
\]
\end{example}

\section{All characteristics are planar}
\label{section:all-planar}

In this section we present the proof of Theorem \ref{theorem:all-planar} split into several lemmas. 

\begin{remark}
Let us mention that characterizing ellipsoids is an old business. For example, Blaschke's characterization of ellipsoids \cite{bla1916,gruber1997} asserts the following: \emph{A convex body $K$ in dimension at least $3$ is an ellipsoid if and only if all its two-dimensional projections have the property that the preimage $($in $K)$ of the boundary of the image of $K$ is planar.} 

In our situation we have something similar. If a closed characteristic $\gamma\subset \partial K\subset\mathbb R^{2n}$ is planar then all its velocities lie in a two-dimensional plane. Then all the tangent to $K$ hyperplanes at points of $\gamma$ have normals in a two-dimensional plane. Hence the projection of $K$ to this plane of normals has the Blaschke property. The bad thing is that we only have a $(2n-2)$-dimensional family of such projections ($2n-2$ is the dimension of the space of all characteristics), while Blaschke's characterization needs the property for all projections, whose space is the Grassmannian $G_{2,2n}$ of dimension $4n-4$.
\end{remark}

Since, by the above remark, the classical theorem does not solve our problem, we go down to our own argument. 

Obviously, if $K$ is a ball then all characteristics are planar and the planarity of characteristics is preserved under an affine symplectomorphism. Hence it remains to prove the opposite direction. All lemmas in this section are stated and proved under the hypotheses of Theorem \ref{theorem:all-planar}, that is, $K\subset\mathbb R^{2n}$ is a strongly convex body with smooth boundary and $n\ge 2$.

\begin{lemma}
\label{lemma:planar same}
If all characteristics of $K$ are planar then they all have the same action.
\end{lemma}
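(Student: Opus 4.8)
The plan is to show that the symplectic action, viewed as a function on the space of characteristics, is \emph{locally constant} along $\partial K$, and then to conclude by connectedness. The local constancy is purely a consequence of the defining property of the characteristic direction, $\dot\gamma\in\ker\omega|_{T\partial K}$, while planarity is what guarantees that every characteristic is closed (so the action is even defined) and that the characteristics form a sufficiently nice family.

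First I would analyze a single planar characteristic. Let $\gamma\subset\partial K$ lie in an affine $2$-plane $P$ with linear part $V$. At each $z\in\gamma$ the characteristic direction $JN_zK$ is tangent to $\gamma$, hence lies in $V$; since $\gamma$ is a closed convex planar curve its tangent directions span $V$, so $P=\aff(\gamma)$ is intrinsic to $\gamma$. Because $\gamma\subset\partial K\cap P$ and $\conv(\gamma)\subset K\cap P$ by convexity, one checks $K\cap P=\conv(\gamma)$, so $P$ meets $\inte K$ and $\partial K\cap P$ is a smooth closed convex curve; strong convexity ensures $V$ is transverse to the tangent hyperplane $T_z\partial K$, meeting it precisely in the characteristic line. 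Thus the characteristic direction is everywhere tangent to this convex curve, the integral curve $\gamma$ sweeps out all of $\partial K\cap P$ and is closed, and, filling it by $K\cap P\subset P$, its action equals the symplectic area
\[
A(\gamma)=\int_{K\cap P}\omega.
\]

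The core step is the invariance of $A$ under motion inside $\partial K$. Fix a primitive $\lambda$ of $\omega$ on $\mathbb R^{2n}$, for instance the one introduced before Lemma~\ref{lemma:same-action}, so that $A(\gamma)=\oint_\gamma\lambda$. For a smooth variation $\gamma_s$ of closed characteristics with variation field $X$, necessarily tangent to $\partial K$, Cartan's formula gives
\[
\tfrac{d}{ds}\oint_{\gamma_s}\lambda=\oint_{\gamma_s}\mathcal L_X\lambda=\oint_{\gamma_s}\bigl(i_X\omega+d\,i_X\lambda\bigr).
\]
The exact term integrates to zero over a closed loop, and $\oint_{\gamma_s}i_X\omega=\oint_{\gamma_s}\omega(X,\dot\gamma_s)\,dt=0$ because $\dot\gamma_s\in\ker(\omega|_{T\partial K})$ while $X\in T\partial K$. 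Equivalently, if two characteristics cobound an annulus $\Sigma\subset\partial K$ then by Stokes $A(\gamma')-A(\gamma)=\int_\Sigma\omega$, and parametrizing $\Sigma$ by characteristic time $t$ and a transverse parameter $s$ makes the integrand $\omega(\partial_t,\partial_s)\,dt\,ds=0$ for the same reason. Hence $A$ is locally constant on the space of characteristics. Globalizing, the characteristic direction is a nonvanishing line field on $\partial K\cong S^{2n-1}$ all of whose leaves are closed; the flow-box (tubular neighborhood) theorem about each closed leaf shows that nearby characteristics are closed leaves cobounding thin annuli with it, so $z\mapsto A(\gamma_z)$ is continuous and locally constant, and therefore constant by connectedness of the sphere.

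The main obstacle is exactly this global step: controlling the characteristic foliation near a closed leaf so that neighboring leaves stay closed and cobound annuli, equivalently that $A$ is genuinely continuous rather than locally constant only on an open dense stratum. This is precisely where planarity and strong convexity are indispensable — planarity forces every leaf to be the closed convex curve $\partial K\cap P$, excluding the quasi-periodic, non-planar orbits that appear on generic ellipsoids and that would disconnect the leaf space, while strong convexity keeps the planar sections $K\cap P$ and their symplectic areas varying smoothly. Once this local product structure is in hand, the first-variation computation together with connectedness of $\partial K$ yields that all actions coincide.
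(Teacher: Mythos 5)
Your proposal is correct and follows essentially the same route as the paper: the identical first-variation computation $\mathcal L_X\lambda=i_X\omega+d\,i_X\lambda$ showing every closed characteristic is a critical point of the action (so the action is locally constant on the family of characteristics), followed by a connectedness argument. The paper organizes the globalization via the connected manifold $Y$ of characteristic $2$-planes rather than flow boxes on $\partial K$, but this is only a cosmetic difference.
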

\begin{proof}
Since the characteristics are planar, they are all closed. From the smooth dependence of a characteristic on the starting point one sees that the $2$-planes spanning the characteristics depend smoothly on an initial point on the characteristic. Moreover, when a characteristic is deformed with non-zero derivative then its containing $2$-plane is also deformed with non-zero derivative. It thus follows that the $2$-planes of all characteristics of $K$ constitute a smooth submanifold $Y$ of the affine Grassmannian $AG_{2,2n}$. This manifold $Y$ naturally parametrizes the characteristics of $K$ and is connected, since $\partial K$ is itself connected.

Note that, for the action functional $\int_\gamma \lambda$ (for a primitive $d\lambda = \omega$), every closed characteristic $\gamma\subset\partial K$ is a critical point since for a vector field $V$ along $\gamma$ tangent to $\partial K$ one has
\[
L_V \left(\int_\gamma \lambda\right) = \int_\gamma L_V\lambda = \int_\gamma i_V d\lambda + d i_V\lambda = \int_\gamma i_V \omega + \int_\gamma d\lambda(V) = \int_\gamma \omega(V,\dot \gamma)\;dt = \int_\gamma 0\;dt = 0.
\] 
Therefore the value of the action is locally constant on $Y$. Since $Y$ is connected, this value of the action is constant on the whole $Y$.
\end{proof}

All characteristics on $\partial K$ are closed with the same action by Lemma~\ref{lemma:planar same} and therefore Lemma~\ref{lemma:same-action} applies, resulting in the following lemma.

\begin{lemma}
\label{lemma:planar viterbo}
If all characteristics on the boundary of $K$ are planar then we have the Viterbo equality:
\[
\vol K = \frac{c(K)^n}{n!}.
\]
\end{lemma}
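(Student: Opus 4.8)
The plan is to deduce this directly from the two preceding lemmas, so essentially no new work is required. First I would invoke Lemma~\ref{lemma:planar same}: the planarity hypothesis forces every characteristic on $\partial K$ to be closed, and more importantly forces them all to share one common action value, call it $c$. This places us precisely in the hypotheses of Lemma~\ref{lemma:same-action}, with $M = \partial K$ a smooth $(2n-1)$-sphere and all characteristics --- equivalently all Reeb orbits of the contact form $\lambda = \frac{1}{2}\sum_i (p_i\,dq_i - q_i\,dp_i)$, after placing the origin in the interior of $K$ --- closed with the same period $c$.

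Applying Lemma~\ref{lemma:same-action} then yields the conclusion immediately. Concretely, that lemma gives $\int_{\partial K} \lambda\wedge \omega^{n-1} = c^n$, while the Stokes-formula computation recorded just after Lemma~\ref{lemma:apb-besse} gives $\int_{\partial K}\lambda\wedge\omega^{n-1} = \int_K \omega^n = n!\,\vol K$. Equating the two expressions produces $\vol K = c^n/n!$.

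The only point I would spell out is the identification of the common action $c$ with the capacity $c(K)$ appearing in the statement. Since $c(K) = c_{EHZ}(K)$ is by definition the minimal action of a closed characteristic on $\partial K$, and in our situation every closed characteristic carries the identical action $c$, the minimum is trivially attained with value $c$, so $c(K) = c$. Substituting this into $\vol K = c^n/n!$ gives the asserted Viterbo equality. I do not anticipate any genuine obstacle here: all the substance was already carried out in Lemmas~\ref{lemma:planar same} and~\ref{lemma:same-action}, and the present statement merely packages their conclusions together with the definition of the Ekeland--Hofer--Zehnder capacity.
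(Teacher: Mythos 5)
Your proposal is correct and matches the paper's own argument exactly: the paper derives this lemma in one sentence by combining Lemma~\ref{lemma:planar same} (planarity implies all characteristics closed with the same action) with Lemma~\ref{lemma:same-action} (which yields $\vol K = c^n/n!$), and your identification of the common action $c$ with $c_{EHZ}(K)$ as the minimal action of a closed characteristic is the same implicit step the paper relies on.
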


\begin{lemma}
\label{lemma:planar to ellipses}
If all characteristics of $K$ are planar then they are all ellipses.
\end{lemma}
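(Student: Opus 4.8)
The plan is to reduce to a single planar orbit, record its rigid structure, and then extract the ellipse from how the neighboring planar characteristics are forced to sit around it.

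First I would fix one planar characteristic $\gamma\subset\partial K$ spanning an affine $2$-plane $P$, with $\vec P$ its direction, and write $\partial K=\{H=1\}$ for a $2$-homogeneous Hamiltonian $H$ (origin interior to $K$). The tangency $\dot\gamma\parallel J\nabla H(\gamma)$ together with $\dot\gamma\in\vec P$ forces $\nabla H(z)\in -J\vec P=:\vec Q$ for all $z\in\gamma$; that is, the Euclidean normals of $\partial K$ along $\gamma$ all lie in the conjugate plane $\vec Q$. Since the action $\int_\gamma\lambda=\int_{K\cap P}\omega$ is positive by Lemma~\ref{lemma:planar viterbo}, the plane $\vec P$ must be symplectic (an isotropic plane would give zero action), and the motion along $\gamma$ is the Hamiltonian flow of $h:=H|_P$ in the symplectic $2$-plane $(P,\omega|_P)$. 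Strong convexity makes the Gauss map $\partial K\to S^{2n-1}$ a diffeomorphism, so $\gamma$ is carried onto the entire round great circle $\vec Q\cap S^{2n-1}$: thus $\gamma$ is a convex curve whose outer normals sweep a round circle, and I must show it is an ellipse.

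The key difficulty is that the data on the single plane $P$ is \emph{not} enough. Because $h$ is $2$-homogeneous its enclosed area satisfies $A(s)=sA(1)$, so the period $A'(s)$ of the in-plane flow is automatically equal on every level $\{h=s\}$; Lemma~\ref{lemma:planar same} therefore yields nothing new on one plane, and convex curves bounding linearly growing area form a huge non-elliptic family. Hence the ellipse can only come from coupling $\gamma$ to the $(2n-2)$-parameter family of the remaining planar characteristics through the \emph{single} support function $h_K$ of the strongly convex body. My plan is to linearize: a nearby characteristic $\gamma_s\subset P_s$ produces, to first order, a Jacobi field $\xi(t)$ along $\gamma$ solving the variational equation $\dot\xi=J\,\nabla^2 H(\gamma)\,\xi$ and tangent to $\partial K$. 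Writing $P_s=p_s+\vec P_s$ with $\vec P_s=(I+sB)\vec P$ and differentiating the membership $\gamma_s(t)\in P_s$, one sees that, modulo a genuine in-plane variation, the transverse part of $\xi(t)$ is an \emph{affine} function of position, namely a constant (the infinitesimal translation of $P$) plus a linear image $B(\gamma(t)-p_0)$ (the infinitesimal rotation of $\vec P$).

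The main step, which I expect to be the hardest, is to promote ``every transverse Jacobi field is affine in $\gamma(t)$'' into ``the flow along $\gamma$ coincides with a linear symplectic flow'', whence $\gamma$ is a linear image of a round circle, i.e.\ an ellipse. Here strong convexity enters decisively: positive definiteness of the reverse Weingarten map $\nabla^2 h_K|_{u^\perp}$ along the great circle $\vec Q\cap S^{2n-1}$ should guarantee that the transverse Jacobi fields span the full expected $(2n-2)$-dimensional space and that the linearized return map around $\gamma$ is elliptic and nondegenerate. Feeding a full set of position-affine solutions into the variational equation $\dot\xi=J\,\nabla^2 H(\gamma)\,\xi$ then overdetermines $\nabla^2 H$ along $\gamma$ and should force $h$ to be a quadratic form on $P$, so that $\gamma=\{h=1\}$ is an ellipse. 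The delicate point is exactly this overdetermination: one must turn the existence of enough affine-in-position solutions of a genuinely \emph{nonlinear} Hamiltonian's variational equation into the constancy of the relevant block of $\nabla^2 H$ along $\gamma$, and it is precisely at this point that the positive-curvature hypothesis cannot be dropped.
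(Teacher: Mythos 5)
Your approach is genuinely different from the paper's, which runs a global holonomy argument: the symplectic-orthogonal distribution $L^{\perp_\omega}$ defines an affine connection on the circle bundle $\partial K\to Y$, the non-vanishing of its first Chern class forces the curvature to be somewhere nonzero, hence the holonomy is a non-discrete (and connected, hence full) subgroup of $\mathrm{SO}(2)$ preserving the John ellipse of $K\cap L$, so $K\cap L$ equals that ellipse. Your proposal, by contrast, is a local variational analysis around a single characteristic, and it has a genuine gap at exactly the step you yourself flag as the hardest. The assertion that a family of position-affine transverse Jacobi fields ``overdetermines $\nabla^2 H$ along $\gamma$ and should force $h$ to be a quadratic form on $P$'' is not an argument, and there are concrete reasons to doubt it can be completed in this form. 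Decompose $\mathbb R^{2n}=\vec P\oplus W$ with $W=\vec P^{\perp_\omega}$; since $J\nabla^2H\in\mathfrak{sp}(2n)$ and the splitting is $\omega$-orthogonal, the symmetry of $\nabla^2H$ ties the $\vec P\to W$ block to the $W\to\vec P$ block but leaves the $\vec P\to\vec P$ block completely free. The transverse ($W$-)component of the variational equation, which is all that your ``affine in position'' information feeds, therefore never touches the in-plane block $\nabla^2H|_{\vec P\times\vec P}$ --- and that is precisely the block that determines the shape of $\gamma$. To reach it you would need to control the in-plane components $\xi_P$ of the transverse Jacobi fields, which the planarity of the nearby characteristics does not pin down (you only quotient them out ``modulo a genuine in-plane variation'', i.e.\ you discard exactly the data you need).

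There is also a structural reason to be suspicious: your argument uses only the characteristics in a neighborhood of $\gamma$, so if it worked it would show that local planarity near $\gamma$ forces $\gamma$ to be an ellipse --- essentially the question the authors explicitly leave open in the introduction. The paper's proof escapes this by a global topological input: $c_1(\eta)\ne0$ in $H^2(Y)=\mathbb Z$ guarantees that the connection's curvature does not vanish identically, hence that the holonomy around \emph{some} loop in $Y$ is a nontrivial rotation; nothing guarantees such non-degeneracy for loops confined near a single characteristic, which is where a purely infinitesimal Jacobi-field analysis would have to find it. Your preliminary observations (the normals along $\gamma$ sweep the full great circle in the conjugate plane, and $\vec P$ is symplectic because the action is nonzero) are correct and parallel the paper's set-up, but the core of the lemma is the missing middle step, so the proposal does not establish the statement.
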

\begin{proof}
Having the same period for planar characteristics, we can again consider the circle $S$ action on $M = \partial K$ and the circle bundle $\eta: M \to M/S = Y$. For this it is useful to translate $K$ so that it contains the origin and assume that $\partial K$ is the level set $\{H=1\}$ of a $2$-homogeneous Hamiltonian. Then the period of the Hamiltonian flow will be the same as the action of a closed characteristic.

Define a connection on the bundle $\eta : M \to Y$ as a splitting of its tangent space to vertical and horizontal subspaces: For each point $p \in M$ on a characteristic $\gamma$ lying in the plane $L$ define 
\[
\Ver_p = \ker \omega |_{T_pM}
\]
\[
\Hor_p =  JL^{\perp} = L^{\perp_\omega}.
\]
Note that the $\omega$-orthogonal $L^{\perp_\omega} = \{y \in \mathbb R^{2n} \ |\ \omega(x, y) = 0, \forall x \in L\}$ is a subspace of $T_pM \subset T_p\mathbb R^{2n}$, since $T_p M$ is $\omega$-orthogonal to $\ker \omega|_{T_pM}$ (spanned by the tangent of $\gamma$ by the definition of a characteristic), $\ker \omega|_{T_pM}\subset L$, and taking the $\omega$-orthogonal reverses the inclusions.

Since the characteristic $\gamma$ has nonzero action and lies in $L$, $L$ is symplectic and is not contained in $L^{\perp_\omega}\subset T_pM$. Hence $L \cap L^{\perp_\omega} = \ker \omega|_L$ is not equal to $L$ and may only be the zero subspace of the two-dimensional $L$. This means that $L$ and $L^{\perp_\omega}$ are symplectic subspaces of $\mathbb R^{2n}$. Notice that $\Hor_p = L^{\perp_\omega}$ is also a connection on the $S$-bundle $\eta : M \to Y$ because it is preserved under the Hamiltonian flow on $M$ which defines the circle $S$ action on $M$.

Consider also the affine plane bundle $E\to Y$ consisting of the planes $L$ spanned by the characteristics. Take the same horizontal subspaces $L^{\perp_\omega}$ to define a connection on $E$. The thus obtained connection is actually affine (its parallel transport maps are affine transformations) since its horizontal planes in different points of $L$ are obtained from each other by a translation. Hence the holonomy groups of $E\to Y$ consist of some affine transformations of the fiber planes $L$. Going back to our $S$-bundle $M\subset E$ we have that its holonomy group also consists of affine transformations of fibers (inside their planes).

The circle bundle $\eta: M \to Y$ has a curvature form, which represents its first Chern class $c_1(\eta)\in H^2(Y)$ (the class does not depend on a connection). The bundle is not trivial and hence $c_1(\eta)$ (the only invariant of circle bundles) must be nonzero. The group $H^2(Y) = \mathbb Z$ has no torsion (see its description in the proof of Lemma~\ref{lemma:same-action}) and therefore $c_1(\eta)$ has no torsion and does not vanish in the de Rham cohomology (with $\mathbb R$ coefficients). Hence the curvature form of $\eta$ cannot be everywhere zero and the holonomy group of $\eta$ is not discrete.

Now consider the John ellipse (the ellipse of the maximal area) $E(L)$ in the planar convex body $K \cap L$ inscribed in a characteristic $\gamma = \partial K \cap L$. Since the holonomy group preserves the body $K \cap L$ it also preserves the John ellipse $E(L)$ (by its uniqueness). Then in a coordinate system in which $E(L)$ is a circle we have that the holonomy is a non-discrete subgroup of $\mathrm{SO}(2)$ (it is connected since $M$ is connected) and hence the holonomy group equals the whole group $\mathrm{SO}(2)$. Therefore the section $K \cap L$ is preserved under any such rotation and then it must coincide with its John ellipse. This completes the proof.
\end{proof} 

\begin{lemma}
\label{lemma: central symmetry}
If all characteristics on the boundary of $K$ are planar and at least one of them is centrally symmetric then $K$ is centrally symmetric.
\end{lemma}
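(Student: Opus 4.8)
The plan is to exploit the half-period map of the characteristic flow. By Lemma~\ref{lemma:planar same} all characteristics are closed with one common action, so after normalizing the $2$-homogeneous Hamiltonian the characteristic flow $\phi_t$ has a common period $T$ on $\partial K$, and the time-$\tfrac{T}{2}$ map $\psi=\phi_{T/2}$ is a globally defined involution of $\mathbb R^{2n}$ preserving each characteristic $\gamma_y=\partial K\cap L_y$. First I would show that on every fiber $\psi$ is the reflection through the center $c_y$ of the ellipse $\gamma_y$ (which exists by Lemma~\ref{lemma:planar to ellipses}). The point is that the connection of Lemma~\ref{lemma:planar to ellipses}, with horizontal spaces $L^{\perp_\omega}$, is invariant under $\phi_t$; hence the flow commutes with parallel transport, so $\psi$ commutes with the holonomy, which in the John coordinates acts on $\gamma_y$ as the full rotation group $\mathrm{SO}(2)$ about $c_y$. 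A fixed-point-free involution of the circle commuting with all rotations must be the rotation by $\pi$, i.e. the central reflection; thus
\[
\psi(z)=2\,c_{\eta(z)}-z,\qquad z\in\partial K .
\]

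Next I would feed in the hypothesis. Placing the origin at the interior point used for the homogeneous Hamiltonian, the assumption that some characteristic $\gamma_{y_0}$ is centrally symmetric means $-\gamma_{y_0}=\gamma_{y_0}$, i.e. $c_{y_0}=0$, so that $L_{y_0}$ is a \emph{linear} symplectic plane and $L_{y_0}\setminus\{0\}$ is exactly the cone over $\gamma_{y_0}$. Since $\psi$ is positively $1$-homogeneous (its generator $X_H$ is $1$-homogeneous), the fiberwise formula above gives $\psi=-\mathrm{id}$ on all of $L_{y_0}$. The desired conclusion $-K=K$ is equivalent to $\psi=-\mathrm{id}$ on all of $\mathbb R^{2n}$, because $\psi$ preserves $\partial K$; so it remains to propagate the equality $\psi=-\mathrm{id}$ from the single plane $L_{y_0}$ to the whole space.

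The main obstacle is exactly this last step, equivalently the statement that \emph{all} centers $c_y$ coincide (equivalently, all characteristic planes pass through the common center). The cleanest route I see is to upgrade $\psi$ to a \emph{linear} map: once $\psi$ is linear it is a symplectic involution whose $(-1)$-eigenspace contains $L_{y_0}$, and since $\psi$ is fixed-point-free on $\partial K$ its $(+1)$-eigenspace, being a linear subspace through the interior point $0$, cannot meet $\partial K$ and hence is $\{0\}$, forcing $\psi=-\mathrm{id}$ and $-K=K$. Thus I would try to prove linearity by differentiating the fiberwise relation $\psi(z)=2c_{\eta(z)}-z$ along $\gamma_{y_0}$ and propagating via the flow-invariant connection: the centers $c_y$ form a parallel section of the affine plane bundle $E\to Y$ (parallel transport, being affine and mapping each section-ellipse $K\cap L_y$ onto the next, carries centers to centers), and $c_{y_0}=0$. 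I expect strong convexity to force this parallel section to be covariantly constant with vanishing velocity, the velocity lying in $L^{\perp_\omega}$, so that $c_y\equiv 0$. Showing that the parallel section is genuinely constant in $\mathbb R^{2n}$ (rather than merely holonomy-invariant in each fiber) is the crux, and is where the positive-curvature hypothesis should enter.
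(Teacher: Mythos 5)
Your first step is a nice and essentially correct observation: normalizing so that $\partial K=\{H=1\}$ for a $2$-homogeneous Hamiltonian, the half-period map $\psi=\phi_{T/2}$ commutes with the holonomy of the connection from Lemma~\ref{lemma:planar to ellipses} (the connection is invariant under the circle action), that holonomy acts on each fiber as the full rotation group about the center of the John ellipse, and a fixed-point-free involution of a circle commuting with all rotations is the rotation by $\pi$; since by Lemma~\ref{lemma:planar to ellipses} each characteristic equals its John ellipse, $\psi$ restricted to $\gamma_y$ is indeed the point reflection through the center $c_y$. The homogeneity argument identifying $\psi$ with $-\id$ on the single linear plane $L_{y_0}$ is also fine.

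The difficulty is that the argument stops exactly where the real content begins, and you say so yourself: you do not prove that all the centers $c_y$ coincide (equivalently, that $\psi=-\id$ globally, equivalently that all characteristic planes pass through one point). The assertion that the centers form a parallel section of $E\to Y$ carries no force here: parallel transport around a loop is a holonomy rotation about $c_y$ and so fixes $c_y$ tautologically, while covariant constancy along a path only says that the velocity of $s\mapsto c_{y(s)}$ lies in the $(2n-2)$-dimensional horizontal space $L_{y(s)}^{\perp_\omega}$, which gives no reason for that velocity to vanish. You offer no mechanism by which strong convexity would force $c_y\equiv 0$, and I do not see one along these lines; a parallel section of an affine bundle is in general very far from being a constant point of $\mathbb R^{2n}$. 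For comparison, the paper closes this step by a global, measure-theoretic argument of a completely different flavor: it passes to the difference body $K-K$, shows (via the orthogonal projection onto the plane of normals $JL_0$) that $\partial(\gamma-\gamma)=2\gamma$ is a planar characteristic of $K-K$ with action $4c$ when $\gamma$ is a centrally symmetric characteristic of $K$ with action $c$, applies the Viterbo equality of Lemma~\ref{lemma:planar viterbo} to both $K$ and $K-K$ to get $\vol(K-K)=2^{2n}\vol K$, and then invokes the equality case of the Brunn--Minkowski inequality to conclude that $K$ and $-K$ are homothetic. Some such global input (volume and capacity) appears to be essential; as it stands your proposal has a genuine gap at its crux.
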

\begin{proof}
In order to prove the central symmetry of $K$, we consider the Minkowski sum of $K$ and $-K$
\[
K - K = \{x - y \in \mathbb R^{2n}\ |\ x, y \in K\}.
\]
Our strong convexity assumptions are equivalent to the fact that the support function $h_K(u)$ of $K$ is twice differentiable outside the origin and its Hessian is positive definite on directions orthogonal to $u$. The support function of $K-K$ is $h_K(u)+h_K(-u)$ and it obviously inherits these properties. Hence we do not leave our class of convex bodies when doing the Minkowski sum.

Let us first show that if all characteristics of $K$ are planar then all characteristics of $K - K$ are planar too. Let $\gamma$ be a characteristic of $K$ lying in the affine plane $L$. Denote the plane $L$ translated to the origin by $L_0$ and note that the Minkowski sum $L-L$ equals $L_0$.

All normal vectors at the points of $\gamma$ lie in the plane $L' = JL_0$. Consider the orthogonal projection $P : \mathbb R^{2n}\to L'$. The boundary points of the image $P(K)$ are exactly the images of the points of $\partial K$ whose normal vectors are parallel to the plane $L'$. By the strict convexity of $K$, the latter point set is a closed curve containing $\gamma$ and in fact equal to $\gamma$. Hence $P(\gamma) = \partial P(K)$. 

Observe that for any convex body $B$ in dimension at least $2$ we have the equality for Minkowski sums $B-B=\partial B-\partial B$. Consider the body $K - K$ and the curve $\beta$ that bounds the two-dimensional planar surface $K\cap L- K\cap L$, which by the above observation equals $\gamma - \gamma\subset L_0\cap (K-K)$. Since the Minkowski sum commutes with linear projections, we have
\[
P(\gamma - \gamma) = P(\gamma) - P(\gamma) = \partial P(K) - \partial P(K) = P(K)-P(K) = P(K - K).
\]
Hence the curve $\beta = \partial (\gamma-\gamma)$ is contained in the boundary of $K-K$ and the normal to $K-K$ in any point of $\beta$ is contained in $L'$. Therefore the characteristic directions of $\partial (K-K)$ at all points of $\beta$ are in $JL' = JJL_0 = - L_0 = L_0$. Hence $\beta$ is a characteristic of $K - K$.

Let $\gamma$ be a centrally symmetric characteristic of $K$ existing by the hypothesis of the lemma. Let $\gamma$ have action $c$, which is the same as the action of any other characteristic and equal to $c_{EHZ}(K)$. Without loss of generality let the center of symmetry of $\gamma$ be the origin. Then $\beta = \partial (\gamma - \gamma)  = 2\gamma$ is a characteristic of $K - K$ with action $4c$. By the Viterbo equality of Lemma~\ref{lemma:planar viterbo}, $\vol (K - K) = (4c)^n/n! = 4^n \cdot c^n/n! = 2^{2n} \vol K$. Hence in the Brunn--Minkowski inequality for the bodies $K$ and $-K$ we have equality
\[
(\vol (K - K))^{1/2n} = 2(\vol (K))^{1/2n} = (\vol K)^{1/2n} + (\vol (-K))^{1/2n}.
\]
From the description of the equality case of the Brunn--Minkowski inequality (see \cite[Theorem~8.1]{gruber2007}) it follows that $K$ and $-K$ are homothetic and thus $K$ is centrally symmetric.
\end{proof}
 
\begin{lemma}
\label{lemma:ellipses to ellipsoid}
If all characteristics on the boundary of $K$ are ellipses then $K$ is an ellipsoid. 
\end{lemma}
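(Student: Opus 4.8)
The plan is to reduce the lemma to the statement that the squared gauge $F=\|\cdot\|_K^2$ of $K$ is a genuine quadratic form on $\mathbb R^{2n}$, and to build this form by assembling the quadratic forms that the elliptical sections induce on the individual characteristic planes. First I would note that every ellipse is centrally symmetric, so Lemma~\ref{lemma: central symmetry} applies and $K$ is centrally symmetric; after a translation I assume its centre is the origin, so that $K=-K$ and $F=\|\cdot\|_K^2$ is well defined. Since \emph{being an ellipsoid} is invariant under affine maps, I am moreover free to apply a linear transformation and normalise, e.g.\ so that the maximal inscribed (John) ellipsoid of $K$ is the round ball; this is convenient but not essential.

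Next I would show that every characteristic plane passes through the origin, equivalently that every section-ellipse is centred at $0$. Using the flat affine connection on the plane bundle $E\to Y$ constructed in the proof of Lemma~\ref{lemma:planar to ellipses}, whose parallel transport is affine and carries each ellipse $\gamma_{L}$ onto its neighbours and hence centre to centre, the centres of the ellipses form a parallel section $\sigma$ of $E$. The central symmetry $x\mapsto -x$ is a linear symplectomorphism, so it descends to an involution $\iota$ of the base $Y$ with $\sigma\circ\iota=-\sigma$. Because $\iota$ preserves the reduced symplectic class $[\omega_Y]$, it acts trivially on $H^*(Y;\mathbb Q)$, which by the computation in Lemma~\ref{lemma:same-action} is that of $\mathbb{CP}^{n-1}$; hence its Lefschetz number equals $n\neq 0$ and $\iota$ has a fixed point $L_0$. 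At $L_0$ the characteristic is centrally symmetric, so $\sigma(L_0)=0$, and flatness of the connection together with the invariance under $\iota$ then forces $\sigma\equiv 0$. Consequently each $L\in Y$ is a linear symplectic $2$-plane and the restriction $F|_L=q_L$ is the positive-definite quadratic form whose unit disk is the centred ellipse $K\cap L$.

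The remaining and hardest step is to upgrade ``$F$ is quadratic on every $L\in Y$'' to ``$F$ is globally quadratic''. This is exactly the difficulty anticipated in the remark after Theorem~\ref{theorem:all-planar}: the planes $L$ sweep out only a $(2n-2)$-dimensional subfamily of the $(4n-4)$-dimensional Grassmannian, through each nonzero point passes a single characteristic plane, and two generic characteristic planes meet only at the origin, so neither a pointwise derivative count nor a naive matching of the $q_L$ on pairwise intersections can close the gap. To propagate the elliptical condition transversally I would use the extra symplectic rigidity already in hand: the horizontal $(2n-2)$-plane $L^{\perp_\omega}$ is tangent to $\partial K$ along the whole of $\gamma_L$ and is a symplectic complement of $L$, while the $S$-bundle $M\to Y$ carries a flat connection with $\mathrm{SO}(2)$ holonomy that fixes every ellipse. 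Differentiating the family $q_L$ along horizontal directions and feeding in the holonomy constraint, I expect to show that the $q_L$ are the restrictions of one parallel quadratic form; the connectedness of $Y$ and the non-vanishing of $[\omega_Y^{n-1}]$ then rule out a monodromy obstruction and let the $q_L$ be assembled into a single positive-definite $Q$ on $\mathbb R^{2n}$, so that $K=\{Q\le 1\}$ is an ellipsoid. I expect this transversal gluing across the half-dimensional family to be the main obstacle, and the point at which strong convexity and the precise structure of the symplectic connection are indispensable.
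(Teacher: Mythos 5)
There is a genuine gap at the step you yourself flag as the hardest one. Knowing that $F=\|\cdot\|_K^2$ restricts to a quadratic form on each characteristic plane $L$ is very weak information: the planes form only a $(2n-2)$-parameter family, exactly one of them passes through each nonzero point, and distinct planes generically meet only at the origin, so \emph{any} gauge function whatsoever is ``quadratic on each $L$'' in the trivial sense that its restriction to $L$ is the gauge of the ellipse $K\cap L$. Your proposed remedy --- differentiating $q_L$ in horizontal directions and ``feeding in the holonomy constraint'' to produce a single parallel quadratic form --- is stated as an expectation, not carried out, and it is not clear it can be carried out: the connection is \emph{not} flat (the proof of Lemma~\ref{lemma:planar to ellipses} shows the holonomy of the circle bundle is all of $\mathrm{SO}(2)$ precisely because $c_1(\eta)\neq 0$, so your repeated appeals to ``flatness'' contradict the construction you are citing), and there is no identification of the quadratic forms on two different planes to compare, since the planes share only the origin. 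The paper closes this gap by a completely different, global mechanism: it introduces the symplectic polar body $K^\omega=JK^\circ$, shows its characteristics live in the same planes, finds one $J$-invariant characteristic plane (through the shortest boundary vector) where the action equals the area of the section, applies the Blaschke--Santal\'o equality for a pair of polar ellipses to get $c_{EHZ}(K)\,c_{EHZ}(K^\omega)=\pi^2$, and then uses the Viterbo equality of Lemma~\ref{lemma:planar viterbo} for both bodies to force equality in the Blaschke--Santal\'o inequality, whose equality case characterizes ellipsoids. No local gluing of the forms $q_L$ is ever needed.

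Two smaller points. First, your argument that all characteristic planes pass through the centre is needlessly elaborate and partly unsound (the Lefschetz step is fine, but the subsequent ``flatness forces $\sigma\equiv 0$'' does not follow for a non-flat connection); the paper does this in three lines: on a closed planar characteristic there are points $x,y$ with opposite velocities, hence opposite normals, hence $y=-x$ by central symmetry, and the chord $xy\subset L$ passes through the origin. Second, your normalization by the John ellipsoid is harmless but plays no role. The overall architecture of your proposal is reasonable up to the point where the real work begins, but the decisive step is missing.
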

\begin{proof}
From Lemma \ref{lemma: central symmetry} we know that $K$ has a center of symmetry, let it be the origin. Let us show that the center of $K$ belongs to all characteristic planes. Indeed, for each point $x$ there is a point $y$ on the same characteristic with parallel velocity with opposite direction. Hence the normal vectors to $\partial K$ at $x$ and $y$ are parallel with opposite direction. From central symmetry of $K$, the line through two points with opposite normal vectors is going through the center. Thus the line through $x$ and $y$ is passing through the center and also lies in the plane of their characteristic, so the center belongs to each characteristic's plane and it is also the center of any characteristic.

Define the symplectic polar body 
\[
K^\omega = \{y\in\mathbb R^{2n}\ |\ \omega(x,y)\le 1,  \forall x\in K\}.
\]
Note that $K^\omega$ is the $J$-rotated polar body defined by the inner product on $\mathbb R^{2n}$,
$K^\omega = JK^\circ$, where 
\[
K^\circ = \{y\in\mathbb R^{2n}\ |\ x\cdot y \le 1,  \forall x\in K\}.
\]
Since $K$ is strictly convex with positive definite second fundamental form at each boundary point, has smooth boundary, and is centrally symmetric, $K^\omega$ is also strictly convex with positive curvature, has smooth boundary and is centrally symmetric. Let us explain some details that we need in the rest of the argument. Since $K$ is strictly convex, for any point $x \in \partial K$ there exists a unique vector $y(x) \in K^\omega$ such that
\[
\omega(x,y(x)) = 1.
\]
Note that from the positivity of the second fundamental form of $\partial K$ it follows that the dependence of $y(x)$ on $x\in \partial K$ is smooth and its inverse is also smooth, since this is just a $J$-rotated dependence of the normal on the point of $\partial K$. This also implies the smoothness of $\partial K^\omega$, since its normal $Jx(y)$ therefore has smooth dependence on the point $y\in \partial K^\omega$.

Let us show that all characteristics of $K^\omega$ are planar and lie in the same planes as those of $K$. Consider any characteristic $\gamma\subset\partial K$ in the $2$-plane $L$; we show that there is a corresponding characteristic of $\partial K^\omega$ in the same plane. The vector $y(x)$ gives the characteristic direction at the point $x \in \partial K$ (it is a $J$-rotated normal vector at the point $x$) and therefore lies in the plane $L$. Hence $\gamma^\omega = \{y(x)\ |\ x\in \gamma\}$ is a smooth curve on $\partial K^\omega$ which lies in $L$. 

Now, interchanging $K$ and $K^\omega$ and noting that the polar of $K^\omega$ is $(K^\omega)^\omega = - K = K$, we get that for any $y \in \gamma^\omega$ its corresponding $x \in \gamma$ gives a characteristic direction of $\partial K^\omega$ at the point $y$ and lies in the plane $L$. Hence $\gamma^\omega$ is a characteristic of $\partial K^\omega$ and, as was shown above, lies in the plane $L$.

Consider the vector $x\in \partial K$ of the smallest length. The normal to $\partial K$ at $x$ and $-x$ (from the central symmetry) is parallel to $x$. Let $\gamma$ be the characteristic through the point $x$ that lies in the plane $L$. Then the normal vector $N(x)$ at $x$ (which is parallel to $x$) and $JN(x)$ both lie in $L$, $L \cap JL \not= \emptyset$. Since $L\cap JL$ is a $J$-invariant linear subspace of $\mathbb R^{2n}$ then $L = JL$ and $L$ is $J$-invariant itself. Then the action of the characteristic $\gamma$ is equal to the area of $K \cap L$ and equal to $c_{EHZ}(K)$ (because all the characteristics have the same action).

Since $\gamma$ and $\gamma^\omega$ are polar ellipses the product of their areas is equal to $\pi^2$ by the Blaschke--Santalo inequality (which is an equality for an ellipse). By the Viterbo equality of Lemma~\ref{lemma:planar viterbo} for both $K$ and $K^\omega$ ($K$ and $K^\omega$ are both strongly convex with smooth boundary) we obtain
\[
\vol(K) \cdot \vol(K^\circ) = \vol(K) \cdot \vol(K^\omega) = \frac{c_{EHZ}(K)^n}{n!} \cdot  \frac{c_{EHZ}(K^\omega)^n}{n!} = \left(\frac{\pi^n}{n!}\right)^2.
\]
Hence we have the equality in the Blaschke--Santalo inequality, which means (see \cite{saintraymond1981,petty1985}) that $K$ is an ellipsoid.
\end{proof}

Combining Lemmas \ref{lemma:planar to ellipses}, \ref{lemma: central symmetry}, \ref{lemma:ellipses to ellipsoid} we get that if all characteristics of $K$ are planar then $K$ is an ellipsoid. The Hamiltonian that defines $K$ is then a positive definite quadratic form. When this quadratic form is transformed to its canonical form
\[
Q = \sum_{i=1}^n a_i (p_i^2 + q_i^2)
\]
after a linear symplectic transformation, the coefficients $a_i$ must be all equal, since all the periods of the characteristics must be the same. Hence in these canonical coordinates $K$ is a ball.

\section{All outer billiard trajectories are planar}
\label{section:outer}

The proof of Corollary~\ref{corollary:outer-planar} follows from Theorem~\ref{theorem:all-planar} and the following lemma.

\begin{lemma}
\label{lemma:outer-planar}
All outer billiard trajectories of a smooth strictly convex body $K\subset\mathbb R^{2n}$ are planar if and only if all characteristics of $K$ are planar.
\end{lemma}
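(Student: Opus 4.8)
The plan is to set up a precise dictionary between a single step of the outer billiard and the tangent line of a characteristic, and then run the two implications separately. The basic observation I will use throughout is that one step $x\mapsto y$ of the outer billiard is recorded by its tangency point $z\in\partial K$, which is the midpoint $z=(x+y)/2$, and that the segment $[x,y]$ lies on the line $\ell_z=z+\mathbb R\,JN(z)$, where $N(z)$ is the normal at $z$. Since the characteristic direction $JN(z)$ is exactly the velocity of the characteristic $\gamma_z$ through $z$, the line $\ell_z$ is simultaneously the tangent line to $\partial K$ at $z$ in the characteristic direction and the tangent line to the curve $\gamma_z$ at $z$. Consequently, if $\gamma_z$ lies in a $2$-plane $L$ then $\ell_z\subset L$, and conversely any containment $\ell_z\subset P$ forces $JN(z)\in P_0$, the plane $P$ translated to the origin.

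First I would prove that planar trajectories force planar characteristics. Fix a characteristic $\gamma$, a point $z_0\in\gamma$, and set $x=z_0+\epsilon JN(z_0)$ for small $\epsilon>0$; then $z_0$ is a tangency point of $x$ and the billiard image is $y=z_0-\epsilon JN(z_0)$. By hypothesis the whole orbit of $x$ lies in a $2$-plane $P$, and since every tangency point $z_k=(x_k+x_{k+1})/2$ of the orbit is a midpoint of two consecutive orbit points it also lies in $P$; moreover each line $\ell_{z_k}$ passes through $x_k,x_{k+1}\in P$, so $\ell_{z_k}\subset P$ and $JN(z_k)\in P_0$. I then consider the set $S=\{\,z\in\partial K\cap P: JN(z)\in P_0\,\}$. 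It is nonempty (it contains the tangency points) and closed; I would show it is open by a local continuation, starting from $z\in S$ the points $z+tJN(z)$ stay in $P$, their billiard images have tangency points again in $S$, and letting $t$ vary sweeps a full neighborhood of $z$ along the convex section curve $\partial K\cap P$. Since $\partial K\cap P$ is a connected closed convex curve, $S$ is all of it, so the characteristic direction is tangent to $\partial K\cap P$ at every point; hence this section curve is an integral curve of the characteristic line field, i.e. a characteristic, and it lies in $P$. As $z_0$ was the tangency point we started from, the characteristic through $z_0$ is exactly this planar section, so $\gamma$ is planar.

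For the converse I would show that planar characteristics make every plane $L=L_\gamma$ spanned by a characteristic $\gamma$ invariant, with the $2n$-dimensional outer billiard restricting there to the ordinary planar outer billiard of the convex section $K\cap L$. Indeed, if $z$ is any tangency point of an external point $x$ then $x\in\ell_z\subset L_{\gamma_z}$, so $x$ lies in the plane of $\gamma_z$; inside that plane the two tangent lines drawn from $x$ to the convex curve $\gamma_z$ touch it at two points, both of which are tangency points of the ambient billiard lying on the single characteristic $\gamma_z$. The step $x\mapsto 2z-x$ therefore keeps the orbit inside $L_{\gamma_z}$, and iterating keeps the entire trajectory in this one plane, so it is planar. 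Combining the two implications proves the lemma, which together with Theorem~\ref{theorem:all-planar} yields Corollary~\ref{corollary:outer-planar}.

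The step I expect to be the main obstacle is the claim, used in the converse, that a trajectory cannot escape the characteristic plane of its current tangency point; equivalently, that \emph{all} characteristic tangency points of a given external point $x$ lie on one characteristic. The danger is a second tangency point $w$ whose plane $L_{\gamma_w}$ merely meets $L_{\gamma_z}$ along a line through $x$, which would let a branch of the a priori two-valued billiard map jump to a different plane and produce a non-planar orbit. To rule this out I would use strong convexity, i.e. the positive definite second fundamental form, to count the solutions $z$ of $x-z\parallel JN(z)$: each characteristic plane through $x$ already contributes the two tangent points of its section, so exhibiting exactly two characteristic tangent lines from $x$ forces all tangency points onto the one characteristic and pins down the branch. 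Making this count rigorous, rather than reading it off from the model case of the ball, is the delicate part of the argument.
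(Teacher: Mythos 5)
Your ``converse'' direction (planar characteristics $\Rightarrow$ planar trajectories) is in spirit the paper's argument: restrict to the characteristic plane and observe that the ambient outer billiard reduces there to the planar outer billiard of the section. The point you correctly flag as the main obstacle --- that a given exterior point has no characteristic tangency points outside the one characteristic plane, so the orbit cannot jump planes --- is exactly what the paper disposes of by citing the uniqueness theorem for outer billiard trajectories (Lemma~13.2 of \cite{tabachnikov1993}); you leave the corresponding count of solutions of $x-z\parallel JN(z)$ as an announced but unexecuted step, so this half is incomplete but repairable by that citation.

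The real gap is in your first direction. You define $S=\{z\in\partial K\cap P:\ JN(z)\in P_0\}$ and want it open in the section curve by ``local continuation'': from $z\in S$ you launch the points $x(t)=z+tJN(z)$ and assert that their billiard images have tangency points \emph{again in $S$}. This does not follow. The trajectory through $x(t)$ is planar by hypothesis, but it lies in \emph{its own} plane $Q(t)$, which is only constrained to contain the line $\ell_z=z+\mathbb R\,JN(z)$. Two distinct $2$-planes can share that line, so $Q(t)$ need not equal $P$; the forward tangency point $w(t)$ lies in $Q(t)\cap\partial K$ and is a good point \emph{for $Q(t)$}, not for $P$. (It cannot even lie in $P$ unless $Q(t)=P$, since $Q(t)\cap P=\ell_z$ and $\ell_z$ meets $\partial K$ only at $z$.) So sweeping $t$ produces a curve of tangency points that may leave $P$ entirely, and the open--closed argument does not close. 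A priori $S$ could be a finite set --- this is precisely the scenario the paper's proof must confront. The paper's route is quite different at this point: assuming the good points of $\gamma=L\cap\partial K$ are not dense, it shows they are finite and the trajectories periodic, establishes that the number of good points between consecutive tangency points is constant (Claim~\ref{claim:uniform}), and then runs a connectedness argument on the decomposition $(0,\infty)=\bigsqcup_n A_n$ of starting parameters by period, deriving a contradiction from the fact that the period must blow up as the starting point approaches $\partial K$. Your proposal contains no substitute for this mechanism, so the forward implication is not proved.
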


\begin{proof}
If all characteristics of $K$ are planar then it is easy to extend any outer billiard trajectory in the $2$-plane of the characteristic to which either of the segment of the trajectory is tangent. From the uniqueness theorem for the outer billiard trajectories (see Lemma~13.2 in \cite{tabachnikov1993}) it follows that all the outer billiard trajectories of $K$ are planar.

Let us now prove that if all outer billiard trajectories of $K$ are planar then all characteristics of $K$ are planar. Equivalently, we will prove that any point $z \in \partial K$ lies on a planar characteristic.

Let $\ell$ be the tangent line at $z$ with a characteristic direction and $L$ be a plane with some outer billiard trajectory passing through $z$ and starting at some point $z_1 \in \ell$. Denote $\gamma = L \cap \partial K$. Call a point $x \in \gamma$ \emph{good} if a characteristic direction at $x$ lies in $L$. Since the outer billiard trajectory starting at $z_1$ touches $\gamma$ at good points the set of good points has at least three points and between any two points of $\gamma$ with parallel characteristic directions there is a good point.

If the set of good points is a dense subset of $\gamma$ then from the continuity of characteristic directions all points of $\gamma$ are good and $\gamma$ is by definition a planar characteristic passing through $z$.

Suppose that the set of good points is not dense. Then there exist at least two good points $x_1$ and $x_2$ of $\gamma$ such that there is no good points between them on one of two segments of curve $\gamma$ (the segment is on the same side of the line $x_1x_2$ as the intersection of characteristic directions at the points $x_1$ and $x_2$). 

Since any outer billiard trajectory is planar, the outer billiard trajectory touching $K$ at the points $x_1$ and $x_2$ (uniquely determined by \cite{tabachnikov1993} and probably different from the trajectory starting at $z_1$) lies in the plane $L$. Denote its consecutive points of tangency with $\gamma$ by $x_1, x_2, x_3, \dots$. 

If there is no good point of $\gamma$ between $x_1$ and $x_2$ then there is no good point between $x_2$ and $x_3$. Otherwise, one could consider another trajectory through a good point $y$ (lying between $x_2$ and $x_3$) and $x_2$, which should have a point of tangency to $\gamma$ (and hence a good point of $\gamma$) between $x_2$ and $x_1$ due to planarity of trajectories and the definition of outer billiard reflection (the point of tangency is always the middle point between the consecutive vertices of the trajectory). This implies that there is no good point of $\gamma$ between $x_3$ and $x_4$ and so on.

We continue stepping by good points of $\gamma$. Let us show that the trajectory must at some stage return to $x_1$, so it can not stop by getting closer and closer to the curve and the number of good points on $\gamma$ is in fact finite. Notice that for inner billiard there are such bodies for which some trajectories can approach the boundary of the body (see \cite{halpern1977}).

If the trajectory does not return to $x_1$ then there exists a limit point 
\[
x_{\infty} = \lim_{m \to \infty} x_m,
\]
which is also a good point (by continuity). Choosing a point $x_m$ sufficiently close to $x_{\infty}$ we can find an outer billiard trajectory passing consecutively through points $x_k$, $x_m$ and $x_{\infty}$ for some $k > 1$ since any point of tangency is a good point and the sequence $\{x_n\}$ lists all good points on the segment from $x_1$ to $x_\infty$. Then between $x_{\infty}$ and $x_n$ there is an infinite number of good points, while between $x_k$ and $x_m$ there is only a finite number of them. This cannot be true by the similar argument as before: For any point $x_i$ between $x_{\infty}$ and $x_m$, one can consider a trajectory through $x_i$ and $x_m$. This trajectory should pass through a good point $x_j$ between $x_m$ and $x_k$ (again since any point of tangency is a good point and the sequence $\{x_n\}$ lists all good points on the segment), and all these points $x_j$ should be different for different $x_i$. This all means that in fact there are a finite number of good points on $\gamma$.   

This implies that the outer billiard trajectory passing through $z_1$ and $z$, the one we started with, is also periodic. Going to the start of our argument, we conclude that if there is a non-periodic outer billiard trajectory through point $z$ then $z$ lies on a planar characteristic.

Assume now that all outer billiard trajectories through $z$ are periodic. Parameterize the initial point (lying on $\ell$) of outer billiard trajectory by $z_1(t) = z - tv$ where $v$ is a unit vector in the characteristic direction at $z$ and $t \in (0, \infty)$. Let the trajectory starting at the point $z_1(t)$ lie in the plane $L(t)$. Denote vertices of the trajectory starting at $z_1(t)$ by $z_1(t), z_2(t), z_3(t),...$ and a tangency point between $z_i(t)$ and $z_{i+1}(t)$ by $x_i(t)$. Since by \cite{tabachnikov1993} an outer billiard map is a symplectomorphism of  $\mathbb R^{2n} \setminus K$, all $z_i(t)$ and $x_i(t) = (z_i(t) + z_{i+1}(t))/2$ are continuous functions of $t$.

We use the following \emph{uniform distribution of good points between the tangency points}.

\begin{claim}
\label{claim:uniform}
If the outer billiard trajectory in the plane $L(t)$ does not show that the curve $\gamma(t)=L(t)\cap\partial K$ is a characteristic of $K$ then the number of good points of $\gamma(t)$ between the consecutive points of tangency $x_i(t)$ and $x_{i+1}(t)$ of the trajectory is independent of $i$.
\end{claim}

\begin{proof}
The proof is already given above: The number of good points on $\gamma(t)$ is finite, there is an injection of the set of good points between $x_{i-1}(t)$ and $x_i(t)$ to the set of good points between $x_i(t)$ and $x_{i+1}(t)$, and vice versa.
\end{proof}

Now consider the sets
\[
A_n = \left\{ t  \in (0, \infty) \middle| \text{ outer billiard trajectory starting at } z_1(t) \text{ has period } n\right\}.
\] 
We are going to show that those sets are closed in $(0, \infty)$. Suppose $\{t_m\}_{m \in \mathbb N} \subset A_n$ and 
\[
\lim_{m \to \infty} t_m = t' \in (0, \infty).
\]
Then $z_{n+1}(t_m) = z_1(t_m)$ for any $m$  and
 \[
 z_{n+1}(t') = \lim_{m \to \infty} z_{n + 1}(t_m) = \lim_{m \to \infty} z_1(t_m) = z_1(t').
 \]
Generally, the trajectory starting at $z_1(t')$ has a period $k$ which divides $n$. We assume that $k < n$ and deduce a contradiction. 

For any $\varepsilon>0$, the inequality $|x_i(t_m) - x_i(t')| < \varepsilon$ holds for sufficiently large $m$ and any $i$. We can take $\varepsilon$ so small that the $\varepsilon$-neighborhoods of $x_i(t')$ do not overlap. Then the curve $\gamma(t_m)$ has $k$ disjoint intersections with those neighborhoods and the tangency points of the trajectory belong to those intersections.

Consider a trajectory starting at $z_1(t_m)$, then $x_{k+1}(t_m)$ lies in the $\epsilon$-neighborhood of $x_1(t') = z = x_1(t_m)$ for sufficiently large $m$. There are three cases: $x_{k+1}(t_m)$ lies after $x_1(t_m)$, before $x_1(t_m)$ or coincide with $x_1(t_m)$. 

From Claim~\ref{claim:uniform} it follows that $x_{k+1}(t_m) \not = x_1(t_m)$ if $k < n$. Indeed, if $x_{k+1}(t_m) = x_1(t_m)$ then $x_k(t_m)\neq x_n(t_m)$ since the period is $n$, not $k$. Since the numbers of good points between $x_k(t_m)$ and $x_{k+1}(t_m) = x_1(t_m)$ and between $x_n(t_m)$ and $x_1(t_m)$ are equal, we must have $x_k(t_m)= x_n(t_m)$, a contradiction.

Assume now that $x_{k+1}(t_m)$ lies after $x_1(t_m)$, then $x_{k+2}(t_m)$ lies after $x_2(t_m)$ because of Claim~\ref{claim:uniform}. Continuing the argument, we obtain that $x_{n + 1}(t_m)$ lies after $x_1(t_m)$ and still in the $\epsilon$-neighborhood of $x_1(t')$, so $x_{n + 1}(t_m)$ cannot equal $x_1(t_m)$, a contradiction. 

For the case when $x_{k+1}(t_m)$ lies before $x_1(t_m)$, the same argument works. Hence, we get a contradiction in any case and conclude that $k = n$ and $t'\in A_n$. Hence $A_n$ is a closed set and since the connected interval is the disjoint union
\[
(0,+\infty) = \bigsqcup_{n=3}^\infty A_n,
\]
then in fact all the trajectories tangent to $\partial K$ at $z$ have the same period. But this cannot be the case, since the period must be arbitrarily large if we start an outer billiard trajectory from a point $z_1$ very close to $z$. Indeed, since $K$ is strictly convex, one shows by induction on $n$ that for any $n$, $z_n(t)\to z$ and $z_n(t)\neq z$ when $t\to +0$ and $z_1(t)\to z$.

\end{proof}

\bibliography{../Bib/karasev}
\bibliographystyle{abbrv}
\end{document}